\def\argmin{\mathop{\rm argmin}}
\newtheorem{Def}{Definition}[section]
\newtheorem{theorem}[Def]{Theorem}
\newtheorem{lemma}[Def]{Lemma}
\newtheorem{proposition}[Def]{Proposition}
\newtheorem{remark}[Def]{Remark}
\newtheorem{Exam}[Def]{Example}
\newtheorem{assumption}[Def]{Assumption}
\theoremstyle{definition}
\title{Stopping rules for Monte Carlo methods of martingale difference type}
\author{\sc Jiezhong Wu\footnote{Email: jiezhongwu2021@u.northwestern.edu.
Affiliation: Department of Industrial Engineering and Management Sciences, Northwestern University, Evanston, IL 60208, USA.} \, and Reiichiro Kawai\footnote{Corresponding Author. Email: raykawai@g.ecc.u-tokyo.ac.jp.
Affiliation: Graduate School of Arts and Sciences / Mathematics and Informatics Center, The University of Tokyo, Japan.
This work was partially supported by JSPS Grants-in-Aid for Scientific Research 20K22301 and 21K03347.}}
\date{}
\begin{document}


\maketitle 

\begin{abstract}
\noindent
We establish a practical and easy-to-implement sequential stopping rule for the martingale central limit theorem, focusing on Monte Carlo methods for estimating the mean of a non-iid sequence of martingale difference type. Starting with an impractical scheme based on the standard martingale central limit theorem, we progressively address its limitations from implementation perspectives in the non-asymptotic regime. Along the way, we compare the proposed schemes with their counterparts in the asymptotic regime. The developed framework has potential applications in various domains. Numerical results are provided to demonstrate the effectiveness of the developed stopping rules in terms of reliability and complexity.

\vspace{0.3em}
\noindent
\textit{Keywords:} Monte Carlo methods; martingale central limit theorem; martingale difference sequences; sequential stopping rules; adaptive variance reduction; early stopping.

\vspace{0.3em}
\noindent
\textit{2020 Mathematics Subject Classification:} 65C05, 60G40, 60G42, 62L15, 68T01.
\end{abstract}


\section{Introduction}\label{introduction}

In Monte Carlo methods in general terms, it is a common practice to fix the total number of samples before running the simulation, while, without sufficient knowledge of the underlying distribution, it could be risky to terminate the procedure at such a priori fixed timing outright for the reason that the estimator that one is dealing with may converge extremely slowly.
Moreover, the length of the simulation needs to be decided in reference to various budgetary constraints, as computing resources available are finite in practice without exception.
Those issues can be addressed by equipping the simulation procedure with a sequential (or, equivalently, adaptive) stopping rule, so that the total number of samples can be decided along the way depending on the previous computation,
rather than stopping the simulation too soon or continuing the simulation endlessly.

In the framework of the standard iid Monte Carlo simulation based on the central limit theorem, there has been a great deal of interest in such stopping criteria in the literature for a very long time.
In the asymptotic regime, sequential stopping rules were first developed based on the first and second moments and investigated in terms of consistency and convergence rates (for instance, \cite{YH1965, PW1992, NS1996, RD1976}, to name just a few).
Nevertheless, in the non-asymptotic regime where the cost of computing needs to be taken into account, 
such asymptotics-based stopping rules may not function as theoretically expected, without extra information and restrictions, for instance, higher moments and boundedness of the underlying distribution (see, for example, \cite{BHST, LWP2013, FLYA2012} and references therein), due to an inaccurate estimate of the error probability at an early stage of the simulation procedure.

Departing from the standard iid Monte Carlo simulation and its underlying standard central limit theorem, non-iid sequences have also attracted attention in the context of sequential stopping rules (for instance, \cite{LWP2013, M1968, doi:10.1287/mnsc.28.5.550, stoppingrules}) and, more recently, machine learning.
Among a variety of such non-iid sequences, the focus of the present work is set on a sequence of martingale difference type, which comes with the martingale central limit theorem.
To be exact, we aim to establish a rigorous yet easy-to-implement sequential stopping rule for the martingale central limit theorem with a view towards Monte Carlo methods in estimating the mean of a sequence of random variables (just like iid Monte Carlo methods), whose law, however, keeps changing in reference to the past information within a martingale framework (unlike iid Monte Carlo methods).

The relevant situation arises, for instance, from adaptive variance reduction techniques employed concurrently with the computation of the empirical mean so that the underlying randomness can be used in both procedures, such as importance sampling \cite{RK_1, RK18}, control variates 
and stratified sampling \cite{
Kawai:2010:AOA:1734222.1734225, 
ISCVSS}, to mention just a few.
Due to their remarkable effectiveness in reducing the estimator variance without requiring much additional computing effort for running the parallel procedure, those adaptive variance reduction methods have found various fields of application, such as structural engineering \cite{ASDIS, ReliabilitySS} and financial risk management \cite{Glasserman2004}. 
The proposed framework on stopping rules for martingale difference sequences also has significant potential in machine learning optimization, particularly stochastic gradient descent methods where the stochastic gradient noises naturally form a martingale difference sequence.
While conventional stopping methods often rely on validation set performance \cite{prechelt2012}, our approach provides a principled alternative based on the statistical properties of the optimization trajectory itself.
This is particularly of relevance given the recent variance reduction techniques for stochastic gradient descent methods \cite{johnson2013accelerating}. 
We also highlight non-asymptotic bounds for the martingale central limit theorem established for averaged stochastic gradient descent \cite{anastasiou2019, shao2022}, offering a theoretical foundation complementary to the practical stopping criteria discussed in the present study.

In constructing valid stopping rules of this martingale type, the primary challenge lies in the impracticality of applying the martingale central limit theorem in its standard form.
Unlike the standard central limit theorem for iid Monte Carlo methods, where the limiting variance is merely unknown, the theoretical variance underpinning the martingale central limit theorem (such as in \cite{martingalebook2022}) is not only unknown but also cumulative averaging.
In addition, the theoretical variance here is unconditional, that is, poorly aligned with the conditional expectations inherent to martingales.
Moreover, the martingale central limit theorem generally hinges on intricate and often unverifiable technical conditions, which further complicates its practical use.
Given those difficulties, we explore an implementable stopping rule by progressively replacing the theoretical variance with more implementable alternatives, while carefully imposing technical conditions along the way for the corresponding stopping rules to remain valid.
In particular, in its final form, the developed stopping rule operates upon a fully implementable empirical variance.

The rest of this paper is set out as follows.
To begin with, in Section \ref{section problem setup}, we prepare the general notation and set up standing assumptions along with some relevant examples in our context. 
In Section \ref{section problem formulation}, we describe the problem formulation and review the martingale central limit theorem in its standard form.
We construct in Section \ref{section main section} sequential stopping rules based on the martingale central limit theorem with its theoretical batch variance replaced with more implementable ones step by step, first with the conditional variance (Section \ref{subsection Stopping rule based on conditional batch variance}) and then the empirical variance (Section \ref{subsection Stopping rule based on empirical batch variance}).
We provide numerical results in Section \ref{section numerical illustations} to demonstrate the effectiveness of the proposed stopping rule on a time series (Section \ref{section ARCH(1)}) and an adaptive variance reduction method (Section \ref{section adaptive control variates}).
We conclude the present study by highlighting future research directions in Section \ref{section concluding remarks}.
To maintain the flow of the paper, we collect the proofs in Section \ref{section proofs}.
Finally, in the supplementary material, we provide verifications of the standing assumptions and conditions for the numerical examples examined in Section \ref{section numerical illustations} and discuss relevant stopping rules to put the proposed framework into perspective.

\section{Preliminaries}\label{section problem setup} 

We begin with some general notation used throughout this study.
We use the notation $\mathbb{N}:=\{1,2,\cdots\}$ and $\mathbb{N}_0:=\mathbb{N}\cap \{0\}$. 
We denote by $|\cdot|$ and $\| \cdot \|$, respectively, the magnitude and the Euclidean norm.
For any linear topological space $V$, we denote by $\mathcal{B}(V)$ the Borel $\sigma$-algebra defined on $V$. 
We denote by $\mathbbm{1}(D)$ the indicator function associated with the event $D$.
We denote by $\nabla_{\mathbf{x}} \text { and } \mathrm{Hess}_{\mathbf{x}}$ the gradient and the Hessian matrix with respect to the multivariate variable $\mathbf{x}$.
We denote by $\mathbb{I}_d$, $\mathbbm{1}_d$ and $0_d$, respectively, the identity matrix of order $d$, the vector in $\mathbb{R}^d$ with all unit-valued components, and the zero vector in $\mathbb{R}^d$.
We reserve $\phi, \Phi$, and $\Phi^{-1}$, respectively, for the standard normal density function, the standard normal cumulative distribution function, and its inverse.
We write $f(n)\sim g(n)$ for $f(n)/g(n)\to 1$ as $n$ tends to a specified value (including $\pm \infty$), $f(n)\lesssim g(n)$ if $f(n)\le g(n)$ for sufficiently large/small $n$ as specified, and $f(n)\simeq g(n)$ if there exists a positive constant $c$ such that $f(n)/c\le g(n)\le cf(n)$, again for sufficiently large/small $n$ as specified.

We reserve the notation $\{X_k\}_{k\in\mathbb{N}}$ for a sequence of random variables of interest.
Based upon the sequence $\{X_k\}_{k\in\mathbb{N}}$, we work on a filtered probability space, denoted by $(\Omega, \mathcal{F},(\mathcal{F}_k)_{k\in\mathbb{N}_0}, \mathbb{P})$ on which a sequence of random variables $\{X_k\}_{k\in\mathbb{N}}$ is defined.
In particular, the filtration starts from the $\sigma$-field $\mathcal{F}_0$, which is a collection of $\mathbb{P}$-null sets, that is $\mathcal{F}_{0}:=\sigma(\mathcal{N})$ with $\mathcal{N}:=\{N \subseteq \Omega: \exists A \in \vee_{k \in \mathbb{N}} \mathcal{F}_k, \mathbb{P}(A)=0, N \subseteq A\}$. 
Note that the filtration $(\mathcal{F}_k)_{k\in\mathbb{N}_0}$ is not necessarily generated by the sequence $\{X_k\}_{k\in\mathbb{N}}$.
Moreover, for the sake of simplicity and convenience, we write  $\mathbb{P}_k(\cdot):=\mathbb{P}(\cdot|\mathcal{F}_k)$, $\mathbb{E}_k[\cdot]:=\mathbb{E}[\cdot|\mathcal{F}_k]$, and $\mathrm{Var}_k(\cdot):=\mathrm{Var}(\cdot|\mathcal{F}_k)$ for all $k\in \mathbb{N}_0$.
In addition, we let $\stackrel{\mathbb{P}_0}{\to}$ denote the convergence in probability under $\mathbb{P}_0$ and indicate by $G_k\stackrel{\mathbb{P}_0}{\to}+\infty$ (for a sequence of random variables $\{G_k\}_{k\in\mathbb{N}}$) that $\lim_{k\to +\infty}\mathbb{P}_0(|G_k|>c)=1$ for all $c>0$.

Now, we impose the base assumption on the sequence $\{X_k\}_{k\in\mathbb{N}}$ of random variables, which remains true throughout the present study.

\begin{assumption}\label{standing assumption 1}
For every $k\in \mathbb{N}$, the random variable $X_k$ is $\mathcal{F}_k$-measurable with
\begin{equation}\label{base assumption}
\mathbb{E}_{k-1}\left[X_k\right]=\mu,
\end{equation}
for some $\mu\in \mathbb{R}$.
\end{assumption}

Clearly, the standing assumption \eqref{base assumption} is much weaker than the usual iid assumption and not to indicate that the sequence $\{X_k\}_{k\in\mathbb{N}}$ is a martingale (for which it would have to satisfy $\mathbb{E}_{k-1}[X_k]=X_{k-1}$ for all $k\in \mathbb{N}$ with suitable $X_0\in \mathcal{F}_0$).
Instead, the assumption \eqref{base assumption} can be interpreted to form a wider class than martingale difference sequences since the sequence $\{X_k\}_{k\in\mathbb{N}}$ is a martingale difference sequence if, moreover, $\mu=0$.
It is, however, inappropriate to assume $\mu=0$ upfront or consider $Y_k:=X_k-\mu$ instead, since the constant $\mu$ is deemed unknown and is exactly what one looks to estimate in the present context.
We also note that a similar assumption has been made in \cite[Assumption 1.1]{LWP2013} in the context of relative-precision stopping rules, but along with a boundedness condition on the underlying randomness (that is, $X_n\in [0,1]$, $a.s.$).

To further illustrate the scope of the standing assumption \eqref{base assumption}, we describe a few relevant problem settings below.

\begin{Exam}{\rm \label{examples of problem settings}
(a) If $\{X_k\}_{k\in \mathbb{N}}$ is a sequence of iid random variables with finite mean $\mu$ and if $(\mathcal{F}_k)_{k\in\mathbb{N}_0}$ is its natural filtration, then it satisfies Assumption \ref{standing assumption 1}.
In this sense, our developments in what follows can be thought of as generalizations of the existing stopping rules for iid Monte Carlo methods.
At this stage, however, we wish to emphasize that such sequences of iid random variables are outside the scope of the present study for the reason that such a way too simple situation can be treated based on much lighter conditions (for instance, \cite{BHST, YH1965, M1968, FLYA2012, NS1996, RD1976}) than we impose in what follows for the general problem setting \eqref{base assumption}.


 
\noindent (b) Consider an ARCH(1) sequence $\{X_k\}_{k\in\mathbb{N}}$, defined 
by $X_k:=(\beta+\alpha X_{k-1}^2)^{1/2} V_k$ for some $\alpha\in (0,1)$ and $\beta\in (0,+\infty)$, where $\{V_k\}_{k\in\mathbb{N}}$ is a sequence of iid random variables with zero mean and unit variance.
If $(\mathcal{F}_k)_{k\in\mathbb{N}_0}$ is the natural filtration generated by the sequence $\{V_k\}_{k\in\mathbb{N}}$, then it holds true that $\mathbb{E}_{k-1}[X_k]=0$ for all $k\in\mathbb{N}$, which satisfies Assumption \ref{standing assumption 1} with $\mu=0$.
Later in Section \ref{section ARCH(1)}, we examine our stopping rules on this problem setting.
 
\noindent (c) A variety of adaptive variance reduction methods (such as \cite{RK_1, RK18}) 
are also within our scope, as described in the introductory section.
Those frameworks can be formulated in common as follows.
Let $\{U_k\}_{k\in\mathbb{N}}$ be a sequence of iid uniform random vectors on $(0,1)^d$ and let $(\mathcal{F}_k)_{k\in\mathbb{N}_0}$ be its natural filtration.
For all $k\in\mathbb{N}$, we set $X_k=R(U_k;{\bm \theta}_{k-1})$, where $R(\cdot;{\bm \theta})$ is an unbiased estimator for $\mu$, in the sense of $\mathbb{E}_0[R(U;{\bm \theta})]=\mu$ for $U\sim U(0,1)^d$, irrespective of the parameter ${\bm \theta}$, corresponding to importance sampling and/or control variates.
If the sequence $\{{\bm \theta}_k\}_{k\in\mathbb{N}_0}$ of the parameters is constructed in an adaptive manner to the filtration $(\mathcal{F}_k)_{k\in\mathbb{N}}$, then it holds true that $\mathbb{E}_{k-1}[X_k]=\mu$, due to ${\bm \theta}_{k-1}\in \mathcal{F}_{k-1}$ and $U_k\perp \mathcal{F}_{k-1}$ for all $k\in\mathbb{N}$.
The stopping rules that we develop are examined on adaptive control variates 
in Section \ref{section adaptive control variates}.
\qed}\end{Exam}

Hereafter, we reserve the notation $t$ for the order of batches taking its value in $\mathbb{N}_0$.
Fix a sequence of non-negative deterministic integers $\{m(t)\}_{t\in\mathbb{N}_0}$ satisfying $m(0)=0$ and $m(t)>m(t-1)$ for all $t\in \mathbb{N}$, that is, $\lim_{t\to +\infty}m(t)=+\infty$.
We denote by $M(t):=\{m(t-1)+1,\cdots,m(t)\}$ and its cardinality $|M(t)|:=m(t)-m(t-1)$, respectively, the set of indices in the $t$th batch and the $t$th batch size, for all $t\in \mathbb{N}$.
We formalize the assumption that the size of batches $|M(t)|$ grows to infinity, as below.

\begin{assumption}\label{standing assumption M}
We henceforth assume $|M(t)|\to +\infty$, as $t\to +\infty$.
\end{assumption}

Now, in order to describe where stopping rules may play a role, we first define the empirical batch mean by
\begin{equation}\label{empirical batch mean}
\mu(t):=\frac{1}{|M(t)|}\sum_{k\in M(t)} X_k,\quad t\in\mathbb{N}.
\end{equation}
Henceforth, we indicate the batch index $t$ as a variable of function (that is, $(t)$), whether deterministic (such as $M(t)$ and $m(t)$) or probabilistic (such as $\mu(t)$), so as to distinguish from the subscript index $k$ (like, $X_k$).

Below, we state the standard results of (i) unbiasedness and (ii) strong consistency of the empirical batch mean \eqref{empirical batch mean}, with a sketchy proof given in Section \ref{section proofs}. 
Note that the denominator in the condition of (ii) is not $k^2$ but $(k-m(t-1))^2$, due to the batch arrangement $M(t)=\{m(t-1)+1,\cdots,m(t)\}$.

\begin{theorem}\label{theorem law of large numbers}
(i) It holds $\mathbb{P}_0$-$a.s.$ that $\mathbb{E}_{m(t-1)}[\mu(t)]=\mathbb{E}_0[\mu(t)]=\mu$ for all $t\in \mathbb{N}$.

\noindent (ii)
If $\limsup_{t\to +\infty}\sum_{k\in M(t)}(k-m(t-1))^{-2}{\rm Var}_0(X_k)<+\infty$, then it holds $\mathbb{P}_0$-$a.s.$ that $\mu(t)\to \mu$, as $t\to+\infty$.
\end{theorem}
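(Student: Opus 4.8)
For part (i), the plan is to exploit the tower property together with the base assumption \eqref{base assumption}. Since $X_k$ is $\mathcal{F}_k$-measurable and $\mathbb{E}_{k-1}[X_k]=\mu$ for every $k$, I would condition each summand in \eqref{empirical batch mean} down to the $\sigma$-field $\mathcal{F}_{m(t-1)}$. For a fixed $k\in M(t)$, repeated conditioning through the tower property yields $\mathbb{E}_{m(t-1)}[X_k]=\mu$ (one peels off the conditional expectations one index at a time, each time applying \eqref{base assumption}). Summing over $k\in M(t)$ and dividing by $|M(t)|$ gives $\mathbb{E}_{m(t-1)}[\mu(t)]=\mu$ a.s.; a further application of the tower property with $\mathcal{F}_0\subseteq\mathcal{F}_{m(t-1)}$ gives $\mathbb{E}_0[\mu(t)]=\mu$. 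The only subtlety is making the one-index-at-a-time argument rigorous, which is a routine finite induction on the gap $k-m(t-1)$.

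For part (ii), the natural route is a martingale strong law of large numbers applied within each batch. The idea is to observe that, for fixed $t$, the partial sums $S_j:=\sum_{k=m(t-1)+1}^{m(t-1)+j}(X_k-\mu)$ form a martingale in $j$ with respect to $(\mathcal{F}_{m(t-1)+j})_j$, because $\mathbb{E}_{k-1}[X_k-\mu]=0$ by \eqref{base assumption}. I would then invoke a Kronecker-lemma-plus-martingale-convergence argument: the hypothesis $\limsup_{t\to+\infty}\sum_{k\in M(t)}(k-m(t-1))^{-2}\,\mathrm{Var}_0(X_k)<+\infty$ is precisely the summability condition on the variance-weighted increments that, via the martingale version of the Kolmogorov criterion, guarantees that the normalized martingale $\sum_k (X_k-\mu)/(k-m(t-1))$ converges a.s. Kronecker's lemma then converts this into $|M(t)|^{-1}\sum_{k\in M(t)}(X_k-\mu)\to 0$, i.e. $\mu(t)\to\mu$.

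The step I expect to be the main obstacle is reconciling the batch structure with the limiting operation: the sum in the hypothesis is indexed by $k\in M(t)$ with the \emph{local} weight $(k-m(t-1))^{-2}$, and the two limits (over $j$ within a batch, and over the batch index $t$) must be combined carefully. In particular, because $m(t-1)$ itself grows with $t$, one cannot simply apply a single martingale SLLN to the whole sequence $\{X_k\}$; instead the re-indexing $k\mapsto k-m(t-1)$ resets the weights at the start of each batch, so the $\limsup$ over $t$ must be handled so that the a.s.\ convergence holds uniformly enough across batches. I would address this by working batch-by-batch and controlling the tail contribution using Assumption \ref{standing assumption M} (so that $|M(t)|\to+\infty$ makes the Kronecker normalization effective) together with the uniform bound supplied by the $\limsup$ hypothesis, and verifying that the resulting convergence is indeed $\mathbb{P}_0$-a.s.\ rather than merely in probability.
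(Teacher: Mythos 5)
Your part (i) is correct and coincides with the paper's argument: since $\mathcal{F}_{m(t-1)}\subseteq\mathcal{F}_{k-1}$ for every $k\in M(t)$, a single application of the tower property together with \eqref{base assumption} gives $\mathbb{E}_{m(t-1)}[X_k]=\mathbb{E}_{m(t-1)}[\mathbb{E}_{k-1}[X_k]]=\mu$ (no induction on the gap $k-m(t-1)$ is needed), and linearity plus $\mathcal{F}_0\subseteq\mathcal{F}_{m(t-1)}$ finishes.

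Part (ii) contains a genuine gap, and it sits exactly at the point you flag as ``the main obstacle'' but never close. The Kolmogorov-criterion mechanism you invoke --- almost sure convergence of a weighted martingale series followed by Kronecker's lemma --- requires a single \emph{infinite} series. Here each batch contributes only the finitely many terms $j=1,\dots,|M(t)|$, the weights $(k-m(t-1))^{-2}$ reset to $1,\tfrac14,\tfrac19,\dots$ at the start of every batch, and distinct batches involve disjoint blocks of the sequence; consequently the assertion that ``$\sum_k (X_k-\mu)/(k-m(t-1))$ converges a.s.'' is not a meaningful limiting statement, and Kronecker's lemma has nothing to act on. What the hypothesis does give --- by orthogonality of the martingale differences under $\mathbb{P}_0$ together with $|M(t)|^{-2}\le (k-m(t-1))^{-2}$ on $M(t)$, or equivalently by the maximal inequality behind the Kolmogorov criterion --- is only the bound
\[
\mathbb{P}_0\left(|\mu(t)-\mu|\ge\epsilon\right)\le\frac{1}{\epsilon^{2}}\sum_{k\in M(t)}\frac{{\rm Var}_0(X_k)}{(k-m(t-1))^{2}},
\]
which under the stated $\limsup$ condition is merely bounded in $t$: it neither vanishes nor is summable. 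Because the $\mu(t)$'s are built from disjoint blocks, almost sure convergence along $t$ requires, in essence, summability in $t$ of such deviation probabilities, and no amount of ``tail control using $|M(t)|\to+\infty$'' can manufacture it. Indeed the sketched route fails outright if all the batch variance is concentrated on the last index: take $X_k=\mu$ for $k\in M(t)\setminus\{m(t)\}$ and $X_{m(t)}=\mu+|M(t)|Z_t$ with $\{Z_t\}_{t\in\mathbb{N}}$ iid standard normal and independent of the past; then Assumptions \ref{standing assumption 1} and \ref{standing assumption M} hold, the $\limsup$ in the hypothesis equals $1$, yet $\mu(t)-\mu=Z_t$ converges in no sense whatsoever. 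So the obstacle you identified is not a technicality to be ``verified'' away: from the $\limsup$ hypothesis alone your argument cannot be completed, and one needs the condition in summed form over $t$ (under which Chebyshev plus Borel--Cantelli concludes in two lines). For comparison, the paper's own proof of (ii) is a bare citation to a Kolmogorov-type strong law for martingale differences and never engages with this re-indexing issue, so you have correctly located the weak point of that argument --- but naming the obstacle and promising to resolve it does not constitute a proof.
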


The experiment \eqref{empirical batch mean} can be significantly different for different batches, because it is not based on iid realizations but on a single martingale difference sequence (not even an array), depending largely on its past history.
Even so, Theorem \ref{theorem law of large numbers} provides a firm theoretical base that ensures our problem setting makes perfect sense in estimating the unknown mean $\mu$, regardless of the batch in which one is computing the empirical batch mean \eqref{empirical batch mean}. 

\section{Problem formulation}\label{section problem formulation}

In light of the unbiasedness and strong consistency of the empirical batch mean \eqref{empirical batch mean} (Theorem \ref{theorem law of large numbers}), we aim to construct a suitable rule for stopping the simulation at the $t$th batch,
in which the error probability $\mathbb{P}_0(|\mu(t)-\mu|>\epsilon)$ is satisfactorily small (than a preset acceptable error probability $\delta\in (0,1)$ with a preset precision level $\epsilon\in (0,+\infty)$) for the first time.
Intuitively, we need to correspond smaller values of the precision $\epsilon$ and error probability $\delta$ to larger values of the stopping batch $\tau$.
From a practical point of view, however, this criterion is not quite useful per se, as the error probability $\mathbb{P}_0(|\mu(t)-\mu|>\epsilon)$ is generally not assessable unless the law of the distance $|\mu(t)-\mu|$ is known. 
It is rather absurd to assume such knowledge, as one is conducting experiments because the constant $\mu$ is unknown in the first place.

On top of the non-assessability of the error probability, it is also a crucial drawback that the unbiasedness (Theorem \ref{theorem law of large numbers} (i)) is generally no longer valid for a general $(\mathcal{F}_{m(t)})_{t\in \mathbb{N}_0}$-stopping time $\tau$, if the empirical batch mean is given as an output at the stopping batch (that is, $\mathbb{E}_0[\mu(\tau)]\ne \mu$).
As an unrealistic yet illustrative example, suppose the support of the random variable $X_k$ under $\mathbb{P}_0$ is $\mathbb{R}$ for every $k\in \mathbb{N}$ and the stopping batch $\tau$ is defined as the first batch $t$ in which all elements $\{X_k\}_{k\in M(t)}$ are strictly positive.
Then, for every $k\in M(\tau)$ in the stopping batch, the support of $X_k$ is $(0,+\infty)$.
If $\mu=0$ (despite it is unknown in reality), then the original empirical batch mean at the stopping batch, which is now strictly positive by definition, is evidently biased for the constant $\mu$ of interest, that is, $\mathbb{E}_0[\mu(\tau)]>0=\mu$, $\mathbb{P}_0$-$a.s.$ 

To address this issue, we output the regenerated empirical batch mean at the stopping batch, instead.
In order to formalize its formulation, we prepare the notation.
We let $\tau$ denote an $(\mathcal{F}_{m(t)})_{t\in\mathbb{N}_0}$-stopping time taking non-negative integer values, that is, the event $\{\tau\in \{0,1,\cdots,t\}\}$ is $\mathcal{F}_{m(t)}$-measurable for all $t\in \mathbb{N}_0$.
We denote by $\mathcal{F}_{m(\tau)}$ the $\sigma$-field at the $(\mathcal{F}_{m(t)})_{t\in\mathbb{N}_0}$-stopping time $\tau$, that is, $\mathcal{F}_{m(\tau)}:=\{B\in \mathcal{F}:\,B\cap \{\tau\in \{0,1,\cdots,k\}\}\in \mathcal{F}_{m(k)} \text{ for all }k\in \mathbb{N}_0\}.$
Clearly, the tail $\{X_k\}_{k\in\{m(\tau)+1,\cdots\}}$ of the sequence is independent of the $\sigma$-field $\mathcal{F}_{m(\tau)}$.
The truncated sequence $\{X_k\}_{k\in\{1,\cdots,m(\tau)\}}$ is $\mathcal{F}_{m(\tau)}$-measurable.
Now, if the stopping batch $\tau$ is determined, then regenerate a sequence of random variables $\{X^{\star}_k\}_{k\in M(\tau)}$ for the stopping $\tau$th batch as a continuation of the past $\{X_k\}_{k\in \{1,\cdots,m(\tau-1)\}}$, that is, we are given the sequence 
\[
 (X_1,\cdots,X_{m(1)},X_{m(1)+1},\cdots,X_{m(\tau-1)},X^{\star}_{m(\tau-1)+1},\cdots,X^{\star}_{m(\tau)}),
\]
with which we compute and output the empirical batch mean $\mu_{\star}(\tau)$ of the regenerated batch $(X^{\star}_{m(\tau-1)+1},\cdots,X^{\star}_{m(\tau)})$, defined by \begin{equation}\label{def of mu*}
\mu_{\star}(\tau):=\frac{1}{|M(\tau)|}\sum_{k\in M(\tau)}X^{\star}_k.
\end{equation}
For illustrative purposes, we provide Figure \ref{figure what to output} to illustrate the procedure for constructing the regenerated empirical batch mean \eqref{def of mu*}.

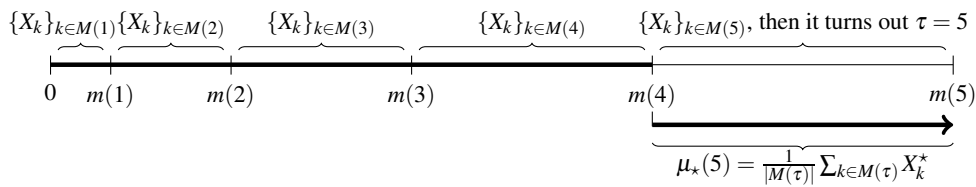
\begin{figure}[ht]
	\centering\scriptsize
	\begin{tikzpicture}[scale=0.8]
	\tikzset{
		position label/.style={
			below = 3pt,
			text height = 1.5ex,
			text depth = 1ex
		},
		brace/.style={
			decoration={brace}, 
			decorate
		}
	}
	
	\draw[line width=0.6mm, -] (0,1.5) node[below, yshift = -0.5em] {\footnotesize $0$} -- (1,1.5) node [below, yshift = -0.5em] {\footnotesize $m(1)$} -- (3,1.5) node [below, yshift = -0.5em] {\footnotesize $m(2)$} -- (6,1.5) node [below, yshift = -0.5em] {\footnotesize $m(3)$} -- (10,1.5) node [below, yshift = -0.5em] {\footnotesize $m(4)$};
	\draw[-] (10,1.5) -- (15,1.5) node [below, yshift = -0.5em] {\footnotesize $m(5)$};
	\foreach \x in {0,1,3,6,10,15} \draw (\x cm,47pt) -- (\x cm,38pt);
	
	\node [position label] (c0) at (0,2.2){};
	\node [position label] (c1) at (1,2.2){};
	\node [position label] (c2) at (3,2.2){};
	\node [position label] (c3) at (6,2.2){};
	\node [position label] (c4) at (10,2.2){};
	\node [position label] (c5) at (15,2.2){};

	\draw [brace] (c0) -- (c1) node [position label, above=0.2em, pos=0.1] {\footnotesize $\{X_k\}_{k\in M(1)}$};
	\draw [brace] (c1) -- (c2) node [position label, above=0.2em, pos=0.5] {\footnotesize $\{X_k\}_{k\in M(2)}$};
	\draw [brace] (c2) -- (c3) node [position label, above=0.2em, pos=0.5] {\footnotesize $\{X_k\}_{k\in M(3)}$};
	\draw [brace] (c3) -- (c4) node [position label, above=0.2em, pos=0.5] {\footnotesize $\{X_k\}_{k\in M(4)}$};
	\draw [brace] (c4) -- (c5) node [position label, above=0.2em, pos=0.5] {\footnotesize $\{X_k\}_{k\in M(5)}$, then it turns out $\tau=5$};
	
	\foreach \x in {10} \draw (\x cm,20.5pt) -- (\x cm,13.5pt);
	
	\draw[line width=0.6mm,->] (10,0.50) -- (15,0.50) ;
    \draw [brace] (15,0.20) -- (10,0.20) node [position label, below=0.3em, pos=0.5] {\footnotesize $\mu_{\star}(5)=\frac{1}{|M(\tau)|}\sum_{k\in M(\tau)}X_k^{\star}$};

\end{tikzpicture}
\caption{Typical arrangement of batches ($m(t)$ and $M(t)$), the batched samples $\{X_k\}_{k\in M(t)}$, and the regenerated batch mean $\mu_{\star}(\tau)$. 
Recall that $m(0)=0$, $m(t)>m(t-1)$, $M(t)=\{m(t-1)+1,\cdots,m(t)\}$ for all $t\in \mathbb{N}$, and $|M(t)|\to +\infty$ (Assumption \ref{standing assumption M}).
Now, suppose it turns out $\tau=5$ based upon the past and latest information $\mathcal{F}_{m(5)}$.
The output of our stopping rules is not the empirical batch mean $\mu(5)(=|M(5)|^{-1}\sum_{k\in M(5)}X_k)$ along the top line, but the regenerated empirical batch mean $\mu_{\star}(5)$ over the $5$th batch along the second line carrying on the past information $\mathcal{F}_{m(4)}$ (indicated by the thick arrow).
The regenerated sequence $\{X_k^{\star}\}_{k\in M(\tau)}$ is conditionally independent of the original counterpart $\{X_k\}_{k\in M(\tau)}$ on the stopping-time $\sigma$-field $\mathcal{F}_{m(\tau)}$.}
\label{figure what to output}
\end{figure}

The motivation for introducing the regenerated empirical batch mean $\mu_{\star}(\tau)$ is nothing but its unbiasedness while the one $\mu(\tau)$ based on the original sequence $\{X_k\}_{k\in \mathbb{N}}$ can be biased.
To be exact, although the stopping batch $\tau$ is determined based on the original sequence $\{X_k\}_{k\in \mathbb{N}}$ (the top line of Figure \ref{figure what to output}), the output needs to be the regenerated empirical batch mean $\mu_{\star}(\tau)$ at the stopping batch $\tau$ to ensure unbiasedness, as formalized below.

\begin{theorem}\label{theorem unbiasedness}
It holds $\mathbb{P}_0$-$a.s.$ that  $\mathbb{E}_{m(\tau)}[\mu_{\star}(\tau)]
=\mu$.
\end{theorem}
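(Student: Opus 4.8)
The plan is to verify the defining property of conditional expectation directly: since $\mu$ is a deterministic constant, it suffices to show that $\mathbb{E}_0[\mu_{\star}(\tau)\mathbbm{1}(A)]=\mu\,\mathbb{P}_0(A)$ for every $A\in\mathcal{F}_{m(\tau)}$. First I would decompose any such $A$ along the values of the stopping batch, writing $A=\bigcup_{t\in\mathbb{N}}(A\cap\{\tau=t\})$, and recall that, by the very definition of $\mathcal{F}_{m(\tau)}$, each event $A\cap\{\tau=t\}$ is $\mathcal{F}_{m(t)}$-measurable. Writing $\mu_{\star}(t):=|M(t)|^{-1}\sum_{k\in M(t)}X^{\star}_k$ for the would-be resampled batch mean on the $t$th batch, so that $\mu_{\star}(\tau)=\sum_{t}\mu_{\star}(t)\mathbbm{1}(\tau=t)$, the problem reduces to establishing, for each fixed $t\in\mathbb{N}$, the identity $\mathbb{E}_{m(t)}[\mu_{\star}(t)]=\mu$, $\mathbb{P}_0$-$a.s.$; summing the resulting contributions $\mathbb{E}_0[\mu_{\star}(t)\mathbbm{1}(A\cap\{\tau=t\})]=\mu\,\mathbb{P}_0(A\cap\{\tau=t\})$ over $t$ then yields the claim.

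The heart of the argument is the fixed-$t$ identity, and it is here that resampling is indispensable. The key observation is that, although $\mathcal{F}_{m(t)}=\mathcal{F}_{m(t-1)}\vee\sigma(\{X_k\}_{k\in M(t)})$ carries the full information of the original $t$th batch, the resampled batch $\{X^{\star}_k\}_{k\in M(t)}$ is, by construction, conditionally independent of $\sigma(\{X_k\}_{k\in M(t)})$ given $\mathcal{F}_{m(t-1)}$. Consequently the extra information in $\mathcal{F}_{m(t)}$ beyond $\mathcal{F}_{m(t-1)}$ is irrelevant to the resampled mean, giving $\mathbb{E}_{m(t)}[\mu_{\star}(t)]=\mathbb{E}_{m(t-1)}[\mu_{\star}(t)]$. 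It then remains to show the latter equals $\mu$, which follows by the same reasoning as in Theorem \ref{theorem law of large numbers}(i) applied to the branched sequence: using the tower property over $\mathcal{F}^{\star}_{k-1}\supseteq\mathcal{F}_{m(t-1)}$ together with the resampled analogue of the base assumption $\mathbb{E}^{\star}_{k-1}[X^{\star}_k]=\mu$ (valid because $\{X^{\star}_k\}_{k\in M(t)}$ is built as a genuine continuation of the past $\mathcal{F}_{m(t-1)}$ with the same conditional-mean structure), one obtains $\mathbb{E}_{m(t-1)}[X^{\star}_k]=\mu$ for every $k\in M(t)$, and hence $\mathbb{E}_{m(t-1)}[\mu_{\star}(t)]=\mu$.

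I expect the main obstacle to be the conditional-independence reduction $\mathbb{E}_{m(t)}[\mu_{\star}(t)]=\mathbb{E}_{m(t-1)}[\mu_{\star}(t)]$, rather than the unbiasedness of the resampled mean itself. This step is exactly what distinguishes $\mu_{\star}(\tau)$ from the original $\mu(\tau)$: because the event $\{\tau=t\}$ is a function of the original batch $\{X_k\}_{k\in M(t)}$, conditioning on $\mathcal{F}_{m(t)}$ to freeze this event would, for the original mean $\mu(t)$, simply return the (generally biased) $\mathcal{F}_{m(t)}$-measurable quantity $\mu(t)$ itself, as the positivity example preceding the theorem illustrates. Making the reduction rigorous therefore requires care in invoking the precise branched construction of $\mathcal{F}^{\star}_{k-1}$ and $\mathbb{P}^{\star}_{k-1}$ to certify that $\{X^{\star}_k\}_{k\in M(t)}$ is conditionally independent of the original batch given the shared history $\mathcal{F}_{m(t-1)}$; the remaining manipulations (the stopping-batch decomposition and the tower-property computation) are routine.
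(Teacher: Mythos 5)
Your proof is correct and follows essentially the same route as the paper's: both reduce the conditioning from $\mathcal{F}_{m(\tau)}$ to $\mathcal{F}_{m(\tau-1)}$ via the conditional independence of the resampled batch, then conclude by the tower property over $\mathcal{F}^{\star}_{k-1}\supseteq\mathcal{F}_{m(\tau-1)}$ together with $\mathbb{E}^{\star}_{k-1}[X^{\star}_k]=\mu$ from Assumption \ref{standing assumption 1}. The only difference is cosmetic: you verify the defining property of conditional expectation by decomposing over the events $\{\tau=t\}$ and arguing at each fixed $t$, whereas the paper carries out the identical chain of equalities directly at the random index $\tau$.
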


To distinguish the two random sources $\{X_k\}_{k\in M(\tau)}$ and $\{X^{\star}_k\}_{k\in M(\tau)}$ correctly, we hereafter denote by $\mathbb{P}^{\star}_{k-1}$ the probability measure on the $\sigma$-field $\mathcal{F}^{\star}_{k-1}$ that makes the branched sequence $(X_1,\cdots,X_{m(\tau-1)},X^{\star}_{m(\tau-1)+1},\cdots,X^{\star}_{k-1})$ measurable for all $k\in M(\tau)$ and leaves the original stopping batch $(X_{m(\tau-1)+1},\cdots,X_{m(\tau)})$ out of its measurability.
Accordingly, we write $\mathbb{E}_{k-1}^{\star}$ and ${\rm Var}_{k-1}^{\star}$, respectively, for the expectation and variance taken under the probability measure $\mathbb{P}^{\star}_{k-1}$.
We note that $\mathcal{F}_{k-1}^{\star}=\mathcal{F}_{k-1}$,  $\mathbb{P}_{k-1}^{\star}=\mathbb{P}_{k-1}$, $\mathbb{E}_{k-1}^{\star}=\mathbb{E}_{k-1}$ and ${\rm Var}_{k-1}^{\star}={\rm Var}_{k-1}$ if $k=m(\tau-1)+1$.

Before closing this section, we provide a more detailed illustration of how to construct the regenerated empirical batch mean \eqref{def of mu*}, as depicted in Figure \ref{figure what to output}, based upon the three problem settings already outlined in Example \ref{examples of problem settings}.

\begin{Exam}{\rm \label{examples of regenerating}
Here, we demonstrate the procedure for constructing the regenerated empirical batch mean \eqref{def of mu*} along the concrete problem settings presented in Example \ref{examples of problem settings}.
As illustrated in Figure \ref{figure what to output}, suppose it turns out $\tau=5$ along the sequence $\{X_k\}_{k\in\mathbb{N}}$.

\noindent (a) If $\{X_k\}_{k\in \mathbb{N}}$ is a sequence of iid random variables, then generate the sequence of iid random variables $\{X^{\star}_k\}_{k\in M(\tau)}$, each under the same law as $\mathcal{L}(X_1)$ and independently of the original sequence $\{X_k\}_{k\in\mathbb{N}}$.
This is exactly what is done in the existing stopping rules for the standard iid Monte Carlo simulation (for instance, \cite{BHST}).



\noindent (b) If $\{X_k\}_{k\in\mathbb{N}}$ is an ARCH(1) sequence with $X_k=(\beta+\alpha (X_{k-1})^2)^{1/2} V_k$, then construct the sequence $\{X^{\star}_k\}_{k\in M(\tau)}$, as follows:
\[
 X^{\star}_k=\begin{dcases}
 (\beta+\alpha (X_{k-1})^2)^{1/2} V^{\star}_k,& \text{if }k=m(\tau-1)+1,\\
 (\beta+\alpha (X^{\star}_{k-1})^2)^{1/2} V^{\star}_k,& \text{if }k\in \{m(\tau-1)+2,\cdots,m(\tau)\},
 \end{dcases}
\]
where $\{V^{\star}_k\}_{k\in\mathbb{N}}$ is an independent copy of the sequence $\{V_k\}_{k\in\mathbb{N}}$ of iid random variables with zero mean and unit variance.
Then, $\{X^{\star}_k\}_{k\in M(\tau)}$ is a regenerated sequence over the $\tau$th batch. 

\noindent (c) In the context of adaptive control variates and importance sampling, generate the sequence $\{U^{\star}_k\}_{k\in M(\tau)}$ of iid uniform random variables on $(0,1)$, independent of the original sequence $\{U_k\}_{k\in M(\tau)}$.
Accordingly, in a similar manner to the construction of the augmented $\sigma$-field $\mathcal{F}^{\star}_k$, branch the natural filtration $(\mathcal{F}_k)_{k\in \{0,1,\cdots,m(\tau-1)\}}$ on the basis of the regenerated sequence $\{U^{\star}_k\}_{k\in M(\tau)}$, as well as construct $\{{\bm \theta}^{\star}_k\}_{k\in M(\tau)}$ in an adaptive manner to this branched filtration.
Then, construct
\[
 X^{\star}_k=\begin{dcases}
 R(U^{\star}_k;{\bm \theta}_{k-1}),&\text{if }k=m(\tau-1)+1,\\
 R(U^{\star}_k;{\bm \theta}^{\star}_{k-1}),&\text{if }k\in \{m(\tau-1)+2,\cdots,m(\tau)\},
 \end{dcases}
\]
so that $\{X^{\star}_k\}_{k\in M(\tau)}$ is a regenerated sequence over the $\tau$th batch.
\qed}\end{Exam}

\section{The stopping rules}\label{section main section}

The main idea of the present work is to replace the assessment of the error probability $\mathbb{P}_0(|\mu(t)-\mu|>\epsilon)$ with more implementable quantities 
for constructing a practical stopping rule.
Along this line, such a quantity that we first work with is the (scaled) theoretical batch variance and standard deviation, respectively, by
\begin{align}\nonumber
v^2_0(t)&:=|M(t)|{\rm Var}_0(\mu(t))=\frac{1}{|M(t)|}\sum_{k\in M(t)} {\rm Var}_0\left(X_k\right)\\
&=\frac{1}{|M(t)|}\sum_{k\in M(t)} \mathbb{E}_0\left[X_k^2 \right]-\mu^2,\label{theoretical batch variance}
\end{align}
and $v_0(t):=\left(v^2_0(t)\right)^{1/2}$ for $t\in \mathbb{N}$, where the first equality holds true by Theorem \ref{theorem law of large numbers} (i) and the tower property of conditional expectation.
Note that the quantities \eqref{theoretical batch variance} are still not implementable in general, whereas they play an important role in our development.
We first impose the following assumptions on its asymptotic behavior.

\begin{assumption}\label{standing assumption 2}
In addition to Assumptions \ref{standing assumption 1} and \ref{standing assumption M}, we impose  $\inf_{t\in\mathbb{N}}v^2_0(t)>0$, $\sup_{t\in \mathbb{N}}v^2_0(t)<+\infty$ and $v^2_0(t+1)/|M(t+1)|\sim v^2_0(t)/|M(t)|$ as $t\to +\infty$, throughout.
\end{assumption}

Plainly speaking, the three assumptions here describe that over every single batch, its sample is neither completely degenerate nor exploding in variance, as well as the batch variance stays in a common order in the long run. 
It is worth noting that Theorem \ref{theorem law of large numbers} (ii) holds true under the second assumption ($\sup_{t\in \mathbb{N}}v^2_0(t)<+\infty$) here, since the condition 
($\limsup_{t\to +\infty}\sum_{k\in M(t)}(k-m(t-1))^{-2}{\rm Var}_0(X_k)<+\infty$) in Theorem \ref{theorem law of large numbers} (ii) is weaker.
For instance, with ${\rm Var}_0(X_k)\sim (k-m(t-1))^q$ for any $q\in (0,1)$, we have $v^2_0(t)\to +\infty$ while the condition $\limsup_{t\to +\infty}\sum_{k\in M(t)}(k-m(t-1))^{-2}{\rm Var}_0(X_k)<+\infty$ holds true.
Hence, the convergence $\lim_{t\to +\infty}\mu(t)= \mu$ remains true $\mathbb{P}_0$-$a.s.$, in what follows.
The third assumption ($v^2_0(t+1)/|M(t+1)|\sim v^2_0(t)/|M(t)|$) does not look intuitive but is rather technical for asymptotic validation of the proposed stopping rules.
Here, we present a standard form of the martingale central limit theorem \cite{martingalebook2022} in our notation.

\begin{theorem}\label{theorem very base MCLT}
If $|M(t)|^{-1} \sum_{k\in M(t)} \mathbb{E}_{k-1}[|X_k-\mu|^2 \mathbbm{1}(|X_k-\mu|>\epsilon \sqrt{|M(t)|})]\stackrel{\mathbb{P}_0}{\to}0$ as $t\to +\infty$ for all $\epsilon>0$, then it holds under $\mathbb{P}_0$ that
\begin{equation}\label{very first MCLT}
\sqrt{|M(t)|}\frac{\mu(t)-\mu}{v_0(t)} \stackrel{\mathcal{L}}{\to} \mathcal{N}(0,1),
\end{equation}
as $t\to +\infty$.
\end{theorem}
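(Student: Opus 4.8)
The plan is to read \eqref{very first MCLT} as a row-sum of a martingale difference array and to reduce it to the standard triangular-array martingale central limit theorem of \cite{martingalebook2022}. Since $\mu(t)-\mu=|M(t)|^{-1}\sum_{k\in M(t)}(X_k-\mu)$ and $|M(t)|v^2_0(t)=\sum_{k\in M(t)}{\rm Var}_0(X_k)$ by \eqref{theoretical batch variance}, the left-hand side of \eqref{very first MCLT} equals $\sum_{k\in M(t)}Y_{t,k}$ with
\[
 Y_{t,k}:=\frac{X_k-\mu}{\sqrt{|M(t)|}\,v_0(t)},\qquad k\in M(t).
\]
First I would record that, for each fixed $t$, $\{Y_{t,k}\}_{k\in M(t)}$ is a martingale difference array along $(\mathcal{F}_k)_{k\in M(t)}$: indeed $\mathbb{E}_{k-1}[X_k-\mu]=0$ by Assumption \ref{standing assumption 1}, which also gives $\mathbb{E}_{k-1}[Y_{t,k}^2]={\rm Var}_{k-1}(X_k)/(|M(t)|v^2_0(t))$ and, via the tower property as in \eqref{theoretical batch variance}, the unconditional normalization $\sum_{k\in M(t)}\mathbb{E}_0[Y_{t,k}^2]=1$. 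With the row length growing ($|M(t)|\to+\infty$ by Assumption \ref{standing assumption M}), this is a bona fide triangular array to which the cited theorem applies.

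The one genuinely analytic step is to verify the Lindeberg condition of the array theorem, $\sum_{k\in M(t)}\mathbb{E}_{k-1}[Y_{t,k}^2\mathbbm{1}(|Y_{t,k}|>\epsilon)]\stackrel{\mathbb{P}_0}{\to}0$ for every $\epsilon>0$, from the stated hypothesis. Unwinding the scaling, $\{|Y_{t,k}|>\epsilon\}=\{|X_k-\mu|>\epsilon v_0(t)\sqrt{|M(t)|}\}$, so this sum equals $(|M(t)|v^2_0(t))^{-1}\sum_{k\in M(t)}\mathbb{E}_{k-1}[|X_k-\mu|^2\mathbbm{1}(|X_k-\mu|>\epsilon v_0(t)\sqrt{|M(t)|})]$. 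Here Assumption \ref{standing assumption 2} is used decisively: writing $c:=\inf_{t}v_0(t)>0$, the threshold obeys $\epsilon v_0(t)\sqrt{|M(t)|}\ge \epsilon c\sqrt{|M(t)|}$, so the truncation event shrinks, while $v^2_0(t)\ge c^2$ controls the prefactor; hence the whole expression is at most $c^{-2}$ times the hypothesis quantity evaluated at $\epsilon'=\epsilon c$, which tends to $0$ in $\mathbb{P}_0$-probability. This settles the Lindeberg requirement.

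The step I expect to be the main obstacle is the remaining hypothesis of the standard array theorem, namely the convergence of the conditional variance to the (deterministic) unconditional normalizer,
\[
 \sum_{k\in M(t)}\mathbb{E}_{k-1}[Y_{t,k}^2]=\frac{1}{|M(t)|v^2_0(t)}\sum_{k\in M(t)}{\rm Var}_{k-1}(X_k)\stackrel{\mathbb{P}_0}{\to}1.
\]
Because $v^2_0(t)$ is assembled from the unconditional variances ${\rm Var}_0(X_k)=\mathbb{E}_0[{\rm Var}_{k-1}(X_k)]$, this is exactly a relative weak law of large numbers for the conditional variances $\{{\rm Var}_{k-1}(X_k)\}_{k\in M(t)}$ about their $\mathbb{P}_0$-means, and it is neither implied by the Lindeberg hypothesis nor by the mere boundedness in Assumption \ref{standing assumption 2}; it is precisely the unconditional-versus-conditional misalignment highlighted in the introduction. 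I would therefore invoke \cite{martingalebook2022} in the form that already carries this conditional-variance convergence among its hypotheses, so that once the martingale difference structure, the unit normalization, and the Lindeberg condition above are in place, the cited theorem delivers $\sum_{k\in M(t)}Y_{t,k}\stackrel{\mathcal{L}}{\to}\mathcal{N}(0,1)$ under $\mathbb{P}_0$, which is \eqref{very first MCLT}.
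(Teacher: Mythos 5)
Your proposal follows essentially the same route as the paper. Theorem \ref{theorem very base MCLT} receives no standalone proof there; it is quoted as a standard result, and the substance appears in the paper's proof of Theorem \ref{theorem base MCLT}, which applies a triangular-array martingale central limit theorem (\cite[Theorem 2.5 (a)]{ish}) to precisely your array $\xi_{t,k}=(X_k-\mu)/\sqrt{|M(t)|v^2_0(t)}$ and then remarks that the Lindeberg condition with threshold $\epsilon\sqrt{|M(t)|}\,v_0(t)$ reduces to the stated one thanks to the bounds on $v^2_0(t)$ in Assumption \ref{standing assumption 2}. Your martingale-difference check, the unit normalization $\sum_{k\in M(t)}\mathbb{E}_0[Y_{t,k}^2]=1$, and the rescaling of the truncation event via $\inf_{t\in\mathbb{N}}v^2_0(t)>0$ are exactly this argument, written out in more detail than the paper provides.

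The obstacle you flagged is genuine, and it is a defect of the statement rather than of your reasoning. The array theorem the paper quotes has two hypotheses: the conditional Lindeberg condition and the conditional-variance stabilization $v_1^2(t)/v^2_0(t)\stackrel{\mathbb{P}_0}{\to}1$, with $v_1^2(t)$ as in \eqref{conditional batch variance}; Theorem \ref{theorem very base MCLT} assumes only the former, and the latter is not a consequence of it. Indeed, take $\mu=0$, $X_1=\pm1$ with probability $1/2$, and for $k\ge2$ let the conditional law of $X_k$ given $\mathcal{F}_{k-1}$ be symmetric two-point with variance $1$ on $\{X_1=1\}$ and variance $4$ on $\{X_1=-1\}$: the standing assumptions and the Lindeberg hypothesis hold (the sequence is bounded, and $v^2_0(t)\to 5/2$), yet $\sqrt{|M(t)|}\,\mu(t)/v_0(t)$ converges in law to an equal mixture of $\mathcal{N}(0,2/5)$ and $\mathcal{N}(0,8/5)$, not to $\mathcal{N}(0,1)$. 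So the conditional-variance condition is indispensable, and both you and the paper end up importing it through the citation --- the paper by listing it among the hypotheses of the ``standard'' theorem quoted in the proof of Theorem \ref{theorem base MCLT}, you by invoking \cite{martingalebook2022} ``in the form that already carries'' it. Your proposal is therefore exactly as complete as the paper's own treatment; the clean fix, which every later use of the result in the paper in effect presupposes, is to add $v_1^2(t)/v^2_0(t)\stackrel{\mathbb{P}_0}{\to}1$ to the hypotheses of Theorem \ref{theorem very base MCLT}.
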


\subsection{The stopping rule based on theoretical batch variances}

In light of the weak convergence \eqref{very first MCLT}, we start with the following stopping rule and its approximation with ``no'' view towards implementation:
\begin{equation}\label{def of tau0}
\tau^0_{\epsilon,\delta}:=\min\left\{t\in\mathbb{N}:\,\mathbb{P}_0(|\mu(t)-\mu|> \epsilon)\le \delta\right\},
\end{equation}
with precision $\epsilon\in (0,+\infty)$ and error probability $\delta\in (0,1)$ specified in subscript for asymptotic justification, unlike in Section \ref{section problem formulation}. 
As is obvious from its expression, the stopping batch $\tau_{\epsilon,\delta}^0$ is $\mathcal{F}_0$-measurable, that is, deterministic in effect, as well as is far from implementable because the law of $|\mu(t)-\mu|$ is unknown.
Still, at this initiation phase, we justify its ``asymptotic'' relevance in a similar manner to \cite{PW1992}, as if the precision $\epsilon$ and the error probability $\delta$ could be adjusted after the sequence $\{X_k\}_{k\in \mathbb{N}}$ of infinite length had been generated.

\begin{proposition}\label{proposition asymptotic validity of tau0}
(i) It holds that  $\tau^0_{\epsilon,\delta}\to +\infty$ as $\epsilon\to 0+$ or $\delta\to 0+$.

\noindent For (ii) and (iii), let the conditions of Theorem \ref{theorem very base MCLT} hold and fix $\delta\in (0,1)$.

\noindent (ii) It holds that $2(1-\Phi(\epsilon\sqrt{|M(\tau^0_{\epsilon,\delta})|}/v_0(\tau^0_{\epsilon,\delta})))\to \delta$, as $\epsilon \to 0+$.

\noindent (iii) If $v^2_0(t)/|M(t)|\sim c t^{-2q}$ as $t\to +\infty$ for some positive $c$ and $q$, then it holds that $\epsilon^{1/q}\tau^0_{\epsilon,\delta}\to (\sqrt{c}\Phi^{-1}(1-\delta/2))^{1/q}$ and $\epsilon^{-1}(\mu(\tau^0_{\epsilon,\delta})-\mu)\stackrel{\mathcal{L}}{\to} \mathcal{N}(0,1/(\Phi^{-1}(1-\delta/2))^2)$, as $\epsilon\to 0+$.
\end{proposition}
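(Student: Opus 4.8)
The plan is to dispatch the three parts in order, basing (i) on monotonicity, (ii) on a uniform central limit approximation, and then reading (iii) off from (ii) together with the power-law hypothesis. Throughout I write $\tau^0:=\tau^0_{\epsilon,\delta}$ and abbreviate $\sigma_t:=v_0(t)/\sqrt{|M(t)|}$ (so that $\mathrm{Var}_0(\mu(t))=\sigma_t^2$), $a_t:=\epsilon/\sigma_t=\epsilon\sqrt{|M(t)|}/v_0(t)$, and $Z_t:=(\mu(t)-\mu)/\sigma_t$, which is exactly the standardized statistic of Theorem \ref{theorem very base MCLT}. I also set $g(a):=2(1-\Phi(a))$, which is continuous and strictly decreasing with $g(a^{*})=\delta$ for $a^{*}:=\Phi^{-1}(1-\delta/2)$. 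By Assumption \ref{standing assumption 2}, $\sigma_t^2\to 0$ and $\sigma_{t+1}/\sigma_t\to 1$ as $t\to+\infty$. For part (i), since the event $\{|\mu(t)-\mu|>\epsilon\}$ shrinks as $\epsilon$ grows and the constraint $\le\delta$ relaxes as $\delta$ grows, $\tau^0$ is nonincreasing in each of $\epsilon$ and $\delta$, so both limits exist in $\mathbb{N}\cup\{+\infty\}$. I would argue by contradiction: if either limit were a finite $T$, a pigeonhole argument produces a fixed batch $t^{*}\le T$ satisfying $\mathbb{P}_0(|\mu(t^{*})-\mu|>\epsilon_n)\le\delta$ (resp.\ $\le\delta_n$) along $\epsilon_n\downarrow 0$ (resp.\ $\delta_n\downarrow 0$); passing to the limit through the monotone events yields $\mathbb{P}_0(\mu(t^{*})\ne\mu)\le\delta<1$ in the first case and $\mathbb{P}_0(|\mu(t^{*})-\mu|>\epsilon)=0$ in the second. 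The second is immediately absurd once $\sigma_{t^{*}}>\epsilon$, since $|\mu(t^{*})-\mu|\le\epsilon$ a.s.\ would force $\sigma_{t^{*}}^2\le\epsilon^2$, contradicting the lower bound $\inf_t v_0^2(t)>0$. The first case is the delicate one: it requires excluding a heavy atom of $\mu(t^{*})$ at $\mu$ of mass $\ge 1-\delta$, which the variance bounds alone do not preclude; I would close this either by the (mild) non-atomicity automatic in the continuous Monte Carlo settings of Example \ref{examples of problem settings} or by an anti-concentration estimate under a higher-moment bound.

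For part (ii), the engine is that $Z_t\stackrel{\mathcal{L}}{\to}\mathcal{N}(0,1)$ by Theorem \ref{theorem very base MCLT}, and because the limit law has the \emph{continuous} distribution function $\Phi$, P\'olya's theorem upgrades this to uniform convergence of distribution functions. Consequently $\eta_t:=\sup_{a\ge 0}|\mathbb{P}_0(|Z_t|>a)-g(a)|\to 0$ as $t\to+\infty$, and since $\mathbb{P}_0(|\mu(t)-\mu|>\epsilon)=\mathbb{P}_0(|Z_t|>a_t)$ we obtain, for each $t$, the sandwich $g(a_t)-\eta_t\le\mathbb{P}_0(|\mu(t)-\mu|>\epsilon)\le g(a_t)+\eta_t$. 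I would then exploit the defining optimality of $\tau^0$: at $t=\tau^0$ the probability is $\le\delta$, giving $g(a_{\tau^0})\le\delta+\eta_{\tau^0}$, while at $t=\tau^0-1$ it exceeds $\delta$, giving $g(a_{\tau^0-1})>\delta-\eta_{\tau^0-1}$. Since part (i) forces $\tau^0\to+\infty$ as $\epsilon\to 0+$, both $\eta_{\tau^0}$ and $\eta_{\tau^0-1}$ vanish, so inverting the continuous decreasing $g$ yields $\liminf a_{\tau^0}\ge a^{*}$ and $\limsup a_{\tau^0-1}\le a^{*}$. Finally the ratio condition in Assumption \ref{standing assumption 2} gives $a_{\tau^0}/a_{\tau^0-1}=\sigma_{\tau^0-1}/\sigma_{\tau^0}\to 1$, so the two bounds collapse to $a_{\tau^0}\to a^{*}$, whence $2(1-\Phi(\epsilon\sqrt{|M(\tau^0)|}/v_0(\tau^0)))=g(a_{\tau^0})\to g(a^{*})=\delta$. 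I expect this crossing analysis—coupling the vanishing CLT error $\eta_t$ with the slowly varying threshold $a_t$ so as to bound the overshoot at the first passage—to be the main obstacle.

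For part (iii), both assertions follow by feeding the power law $\sigma_t^2\sim c\,t^{-2q}$ into the conclusion of part (ii). As $\tau^0$ is deterministic and diverges, substituting $t=\tau^0$ into $\sigma_t\sim\sqrt{c}\,t^{-q}$ is legitimate, and from $a_{\tau^0}=\epsilon/\sigma_{\tau^0}\to a^{*}$ I obtain $\sqrt{c}\,(\tau^0)^{-q}\sim\epsilon/a^{*}$, which rearranges to $\epsilon^{1/q}\tau^0\to(\sqrt{c}\,\Phi^{-1}(1-\delta/2))^{1/q}$. For the limit law I write $\epsilon^{-1}(\mu(\tau^0)-\mu)=a_{\tau^0}^{-1}Z_{\tau^0}$; the martingale central limit theorem along the diverging deterministic sequence gives $Z_{\tau^0}\stackrel{\mathcal{L}}{\to}\mathcal{N}(0,1)$, while $a_{\tau^0}^{-1}\to 1/a^{*}$ by part (ii), so Slutsky's theorem delivers $\epsilon^{-1}(\mu(\tau^0)-\mu)\stackrel{\mathcal{L}}{\to}\mathcal{N}(0,1/(\Phi^{-1}(1-\delta/2))^2)$.
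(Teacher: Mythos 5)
Your parts (ii) and (iii) are correct and are essentially the paper's own argument. The paper likewise starts from the two defining inequalities $\mathbb{P}_0(|\mu(\tau^0_{\epsilon,\delta}-1)-\mu|>\epsilon)>\delta$ and $\mathbb{P}_0(|\mu(\tau^0_{\epsilon,\delta})-\mu|>\epsilon)\le\delta$, replaces both by $2(1-\Phi(\cdot))$ via the martingale central limit theorem \eqref{very first MCLT} (your explicit appeal to P\'olya's theorem is precisely what legitimizes the paper's ``$\lesssim$ \dots for sufficiently small $\epsilon$'' along the moving threshold), and then closes the sandwich using $v^2_0(t+1)/|M(t+1)|\sim v^2_0(t)/|M(t)|$ from Assumption \ref{standing assumption 2}; this is exactly the display \eqref{sandwich0}. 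The only divergence is in the first claim of (iii): the paper passes through a rescaled argmin representation, $\epsilon^{1/q}\tau^0_{\epsilon,\delta}\sim\argmin_{t\in(0,+\infty)}\{2(1-\Phi(\epsilon\sqrt{|M(\lfloor\epsilon^{-1/q}t\rfloor)|}/v(\lfloor\epsilon^{-1/q}t\rfloor)))\le\delta\}$, whereas you feed $v_0^2(t)/|M(t)|\sim ct^{-2q}$ directly into the conclusion of (ii) and rearrange. Both are valid; your route is shorter and reuses (ii) instead of redoing a passage-time asymptotic. The second claim of (iii) is obtained identically in both proofs (the paper's \eqref{scaled second order for tau0} is your factorization $\epsilon^{-1}(\mu(\tau^0)-\mu)=a_{\tau^0}^{-1}Z_{\tau^0}$ plus Slutsky).

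On part (i), you should know that the paper's entire proof is one sentence: the claim is ``trivial, since every element of $\{X_k\}_{k\in\mathbb{N}}$ is not degenerate due to $\inf_{t\in\mathbb{N}}v^2_0(t)>0$.'' The obstruction you isolate is therefore not something the paper resolves and you missed; it is a genuine lacuna in the paper itself. Non-degeneracy only yields $\mathbb{P}_0(\mu(t)\ne\mu)>0$ for each fixed $t$, not $\mathbb{P}_0(\mu(t)\ne\mu)>\delta$, and your ``heavy atom'' scenario is compatible with all standing assumptions: take $X_k$ iid equal to $\mu$ with probability $1-\delta'$ and to $\mu\pm K$ symmetrically otherwise, with $\delta'$ small and $K=1/\sqrt{\delta'}$; then $v_0^2(t)\equiv 1$, the Lindeberg condition holds, yet $\mathbb{P}_0(\mu(1)=\mu)\ge(1-\delta')^{|M(1)|}>1-\delta$, so $\tau^0_{\epsilon,\delta}=1$ for every $\epsilon>0$. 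Similarly, for the $\delta\to0+$ direction your caveat ``once $\sigma_{t^{*}}>\epsilon$'' is exactly the missing hypothesis: $X_k$ iid uniform on $[\mu-\epsilon,\mu+\epsilon]$ keeps $v_0^2(t)$ bounded away from zero while $\mathbb{P}_0(|\mu(t)-\mu|>\epsilon)=0$ for every $t$, so $\tau^0_{\epsilon,\delta}=1$ for every $\delta$. In short, your monotonicity-plus-pigeonhole skeleton is the right reduction, the two residual cases you flag cannot be closed from the stated assumptions, and the paper's ``trivial'' disposes of them only by silently assuming them away (e.g., non-atomic batch means or sufficiently spread support); your proposed fixes (non-atomicity or an anti-concentration bound) are exactly the kind of supplementary condition the proposition actually needs.
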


In short, it is confirmed in Part (i) that the simulation needs to go on forever if the criterion \eqref{def of tau0} is conducted rigidly ($\epsilon\to 0+$ or $\delta\to 0+$), just as it should be in principle.
Moreover, let us emphasize the divergence $\tau^0_{\epsilon,\delta}\to +\infty$ here is purely deterministic, with nothing to do with probabilistic convergence modes, such as almost sure or in probability.
Part (ii) justifies the replacement of the stopping rule $\tau_{\epsilon,\delta}^0$ by the following approximation in reference to the weak convergence \eqref{very first MCLT}, for sufficiently small precision $\epsilon$:
\begin{align}
\tau^0_{\epsilon,\delta}&=\min\left\{t\in\mathbb{N}:\,\mathbb{P}_0(|\mu(t)-\mu|> \epsilon)\le \delta\right\}\nonumber\\
&\sim \min\left\{t\in\mathbb{N}:\,2\left(1-\Phi\left(\frac{\epsilon \sqrt{|M(t)|}}{v_0(t)}\right)\right)\le \delta\right\},\label{tau0 and approximation}
\end{align}
where the fraction inside the standard normal distribution function is implementable for all $t\in \mathbb{N}$ due to $\inf_{t\in \mathbb{N}}v^2_0(t)>0$ (Assumption \ref{standing assumption 2}). 
Indeed, the effectiveness of this approximation is the key building block for all the following developments in the upcoming sections.
As for Part (iii), the first result indicates that the stopping batch increases in the order of $\epsilon^{-1/q}$ as the precision $\epsilon$ tends to vanish, depending on the exponent $q$ in the assumption $v^2_0(t)/|M(t)|\sim ct^{-2q}$.
In the context of typical non-batched experiments with empirical mean $n^{-1}\sum_{k=1}^n X_k$ and convergent empirical variance, this assumption often holds with $q=1/2$, resulting in the asymptotic behavior of the stopping rule $\tau^0_{\epsilon,\delta}$ like $\epsilon^{-2}$.
Next, the second result of Part (iii) describes the second-order structure of the output $\mu(\tau_{\epsilon,\delta}^0)$ with the limiting variance $1/(\Phi^{-1}(1-\delta/2))^2$.
That is, the experiment is expected to be more precise (that is, a smaller limiting variance) when the value of the error probability $\delta$ is very small, just as desired.

In turn, for implementation purposes, that is, in the non-asymptotic regime with both $\epsilon$ and $\delta$ fixed, the stopping rule and its approximation \eqref{tau0 and approximation} need to be reformulated in a way to approximate the failure probability $\mathbb{P}_0(|\mu(t)-\mu|>\epsilon)$ by the conditional probability $\mathbb{P}_{m(\tau^0_{\epsilon,\delta})}(|\mu_{\star}(\tau^0_{\epsilon,\delta})-\mu|>\epsilon)$, in the sense of 
\begin{align}
&\mathbb{P}_{m(\tau^0_{\epsilon,\delta})}\left(|\mu_{\star}(\tau^0_{\epsilon,\delta})-\mu|>\epsilon\right)\nonumber\\
&\quad =\mathbb{P}_{m(\tau^0_{\epsilon,\delta})}\left(\sqrt{|M(\tau^0_{\epsilon,\delta})|}
 \frac{|\mu_{\star}(\tau^0_{\epsilon,\delta})-\mu|}{v_0(\tau^0_{\epsilon,\delta})}>\sqrt{|M(\tau^0_{\epsilon,\delta})|}\frac{\epsilon}{v_0(\tau^0_{\epsilon,\delta})}\right)\nonumber\\
 &\quad \approx 2\left(1-\Phi(\epsilon \sqrt{|M(\tau^0_{\epsilon,\delta})|}/v_0(\tau^0_{\epsilon,\delta}))\right),\label{approximation by v_0(t)}
\end{align}
provided that the value of the stopping batch $\tau_{\epsilon,\delta}^0$ is luckily far away.
We stress that the last approximation in \eqref{approximation by v_0(t)} is valid because the regenerated batch $\{X_k^{\star}\}_{k\in M(\tau)}$ is conditionally independent of the stopping batch $\tau_{\epsilon,\delta}^0$ on the stopping-time $\sigma$-field $\mathcal{F}_{m(\tau_{\epsilon,\delta}^0)}$. 

\subsection{The stopping rule based on implementable batch variances}

As is now obvious from its expression, the crucial drawback of the approximate stopping rule \eqref{approximation by v_0(t)} is the unavailability of the theoretical batch standard deviation $\{v_0(t)\}_{t\in \mathbb{N}}$.
In what follows, we aim to replace the theoretical batch standard deviation in the approximate criterion \eqref{tau0 and approximation} with a more implementable sequence $\{v(t)\}_{t\in \mathbb{N}}$ 
and make the resulting criterion more precise for a better application of the following martingale central limit theorem 
\cite[Chapter VIII.1]{P2012}, which is a direct consequence of Theorem \ref{theorem very base MCLT} owning to the continuous mapping theorem.

\begin{theorem}\label{theorem base MCLT}
Let $\{v(t)\}_{t\in \mathbb{N}}$ be an $(\mathcal{F}_{m(t)})_{t\in \mathbb{N}}$-adapted sequence of random variables satisfying $\inf_{t\in\mathbb{N}}v^2(t)>0$, $\sup_{t\in\mathbb{N}}v^2(t)<+\infty$ and $v^2(t)/v^2_0(t)\stackrel{\mathbb{P}_0}{\to}1$, as $t\to+\infty$, and let the conditions of Theorem \ref{theorem very base MCLT} hold.
Then, it holds under $\mathbb{P}_0$ that 
\begin{equation}\label{first MCLT}
\sqrt{|M(t)|}\frac{\mu(t)-\mu}{v(t)} \stackrel{\mathcal{L}}{\to}\mathcal{N}(0,1),
\end{equation}
as $t\to +\infty$.
\end{theorem}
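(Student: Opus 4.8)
The plan is to reduce the claim to the already-established martingale central limit theorem (Theorem \ref{theorem very base MCLT}) through a Slutsky-type argument, exactly as the surrounding discussion anticipates. The starting point is the algebraic identity
\begin{equation*}
\sqrt{|M(t)|}\,\frac{\mu(t)-\mu}{v(t)}
=\left(\sqrt{|M(t)|}\,\frac{\mu(t)-\mu}{v_0(t)}\right)\frac{v_0(t)}{v(t)},
\end{equation*}
which is legitimate for every $t$ because $\inf_{t\in\mathbb{N}}v_0^2(t)>0$ (Assumption \ref{standing assumption 2}) and $\inf_{t\in\mathbb{N}}v^2(t)>0$ ensure that neither denominator vanishes. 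The factor in parentheses is precisely the quantity treated in Theorem \ref{theorem very base MCLT}, which, under the imposed Lindeberg-type condition, converges in law to $\mathcal{N}(0,1)$ under $\mathbb{P}_0$.

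First I would dispose of the correction factor $v_0(t)/v(t)$. Writing $v_0(t)/v(t)=\left(v^2(t)/v_0^2(t)\right)^{-1/2}$ and invoking the hypothesis $v^2(t)/v_0^2(t)\stackrel{\mathbb{P}_0}{\to}1$, the continuous mapping theorem for convergence in probability — applied to the map $x\mapsto x^{-1/2}$, which is continuous at $x=1$ — yields $v_0(t)/v(t)\stackrel{\mathbb{P}_0}{\to}1$. The uniform bounds $\inf_{t}v^2(t)>0$ and $\sup_{t}v^2(t)<+\infty$, together with the analogous bounds on $v_0^2$ in Assumption \ref{standing assumption 2}, confine this ratio to a compact subset of $(0,+\infty)$ on which $x\mapsto x^{-1/2}$ is uniformly continuous, so the step carries no hidden measurability difficulty despite $v(t)$ being genuinely random (only $(\mathcal{F}_{m(t)})_{t\in\mathbb{N}}$-adapted), in contrast to the deterministic $v_0(t)$.

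With the two convergences in hand I would invoke Slutsky's theorem under the single measure $\mathbb{P}_0$: a sequence converging in law to $\mathcal{N}(0,1)$, multiplied by a sequence converging in $\mathbb{P}_0$-probability to the constant $1$, converges in law to $1\cdot\mathcal{N}(0,1)=\mathcal{N}(0,1)$. Concretely, the joint convergence $\left(\sqrt{|M(t)|}(\mu(t)-\mu)/v_0(t),\,v_0(t)/v(t)\right)\stackrel{\mathcal{L}}{\to}(\mathcal{N}(0,1),1)$ holds because the second coordinate tends to a deterministic constant, and the continuous mapping theorem applied to multiplication then delivers \eqref{first MCLT}. Since $\mathbb{P}_0$ conditions only on the $\mathbb{P}$-null sets collected in $\mathcal{F}_0$, there is no distinction here from the unconditional law, and no extra care is required on that account.

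The argument is essentially routine, and I expect the only genuine point of attention — rather than a true obstacle — to be the careful bookkeeping of the modes of convergence. Weak convergence is not in general preserved under multiplication by another weakly convergent sequence, so the proof relies crucially on the limit of $v_0(t)/v(t)$ being the deterministic value $1$ rather than merely some random limit; this is exactly what the hypothesis $v^2(t)/v_0^2(t)\stackrel{\mathbb{P}_0}{\to}1$ supplies, and it is the sole substantive input beyond Theorem \ref{theorem very base MCLT}. I would therefore take care to apply the continuous mapping theorem to convergence in probability for the ratio and to weak convergence for the product, and in that order, so that Slutsky's theorem is deployed in its correct random-denominator form.
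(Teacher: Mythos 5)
Your proposal is correct and follows essentially the same route as the paper: the paper likewise obtains \eqref{first MCLT} from the weak convergence \eqref{very first MCLT} together with Slutsky's theorem, using the hypothesis $v^2(t)/v^2_0(t)\stackrel{\mathbb{P}_0}{\to}1$ to control the ratio of normalizers. The only cosmetic difference is that the paper's proof also re-derives Theorem \ref{theorem very base MCLT} from a standard martingale CLT and checks the simplification of the Lindeberg condition, whereas you invoke Theorem \ref{theorem very base MCLT} directly, which is legitimate since its conditions are assumed in the statement.
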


In the literature, a variety of sufficient conditions have been provided in different forms for the weak convergence \eqref{first MCLT} to hold true, which we do not pursue as those are not directly relevant in our context.
For detail, we refer the reader to, for instance, \cite[Theorem 2.5]{ish}.
Let us stress that the condition $v^2(t)/v^2_0(t)\stackrel{\mathbb{P}_0}{\to} 1$ implies that the sequence $\{v(t)\}_{t\in\mathbb{N}}$ does not vanish in the limit, due to Assumption \ref{standing assumption 2}.
From a practical point of view, this is a reasonable property, since the sample $\{X_k\}_{k\in\mathbb{N}}$ itself (not the sample average) satisfying $v^2(t)\to 0$ would otherwise necessarily converge to the constant $\mu$ of interest, for which a stopping rule would no longer be of particular use. 

With the sequence $\{v(t)\}_{t\in\mathbb{N}}$ (in place to the impractical one $\{v_0(t)\}_{t\in\mathbb{N}}$), the approximate stopping rule \eqref{tau0 and approximation} can be advanced a step further to the following stopping rule:
\begin{equation}\label{stopping based on general v(t)}
 \tau_{\epsilon,\delta}:=\min\left\{t\in \mathbb{N}:\,2\left(1-\Phi\left(\frac{\epsilon \sqrt{|M(t)|}}{v(t)+a(t)}\right)\right)\le \delta\right\},
\end{equation}
where $a(t)$ denotes a strictly positive function that decreases monotonically to zero as $t\to +\infty$, equipped for the purpose of asymptotic justification for the moment.
Later in Section \ref{section numerical illustations}, we touch on the effect of the term $a(t)$ on the performance of the stopping rule \eqref{stopping based on general v(t)}.

Let us emphasize that the sequence $\{v(t)\}_{t\in \mathbb{N}}$ in \eqref{stopping based on general v(t)} cannot be built on the regenerated sequence $\{X_k^{\star}\}_{k\in M(\tau_{\epsilon,\delta})}$, because $\tau_{\epsilon,\delta}$ needs to be an $(\mathcal{F}_{m(t)})_{t\in\mathbb{N}_0}$-stopping time, generated by the original sequence $\{X_k\}_{k\in \mathbb{N}}$ alone.
On the basis of the stopping time \eqref{stopping based on general v(t)}, we construct a stopping rule as follows.

\begin{algorithm}[H]
\KwIn{Precision $\epsilon$, error probability $\delta$, random seed $\omega$ and batch indices $\{m(t)\}_{t\in \mathbb{N}_0}$}
\KwOut{$\tau_{\epsilon, \delta}(\omega)$ and $\mu_{\star}(\tau_{\epsilon,\delta})(\omega)$}
 {$t\leftarrow 1$\;}
 
 \While{$2(1-\Phi( \epsilon\sqrt{|M(t)|}/(v(t)+a(t))))(\omega)> \delta$}{
  $t\leftarrow t+1$\;}
  Set $\tau_{\epsilon, \delta}(\omega)\leftarrow t$ and compute $\mu_{\star}(\tau_{\epsilon,\delta})(\omega)$.
 \label{algorithm v(t)}
 \caption{Stopping rule based on a more implementable batch variance}
\end{algorithm}

\vspace{1em}
In a similar manner to \cite{YH1965}, the ``asymptotic'' relevance of the criterion \eqref{stopping based on general v(t)} can be justified in a way that the precision $\epsilon$ and the error probability $\delta$ are adjusted after the sequence $\{X_k\}_{k\in \mathbb{N}}$ of infinite length (which is certainly impossible to keep in hand) has been generated.
We note that the almost sure convergence $v^2(t)/v^2_0(t)\to 1$ can be weakened to the weak convergence $\{v^2(\lfloor t/\epsilon\rfloor)/v^2_0(\lfloor t/\epsilon\rfloor):\,t\in (0,+\infty)\}\to 1$ in $\mathbb{D}((0,+\infty);\mathbb{R})$ as $\epsilon \to 0+$ for a version of Proposition \ref{proposition asymptotic validity 1} based on the functional weak law of large numbers, while it cannot be weakened further to $v^2(t)/v^2_0(t)\stackrel{\mathbb{P}_0}{\to}1$ since then the corresponding version of Part (ii) would not hold anymore (see, for instance, \cite{gut2009}).

\begin{proposition}\label{proposition asymptotic validity 1}
Suppose $\lim_{t\to +\infty}v^2(t)/v^2_0(t)=1$, $\mathbb{P}_0$-$a.s.$, in addition to the conditions imposed in Theorem \ref{theorem base MCLT}.

\noindent (i) It holds $\mathbb{P}_0$-$a.s.$ that $\tau_{\epsilon,\delta}\to +\infty$ as $\epsilon\to 0+$ or $\delta\to 0+$.

\noindent For (ii) and (iii), fix $\delta\in (0,1)$.

\noindent (ii) It holds $\mathbb{P}_0$-$a.s.$ that $2(1-\Phi(\epsilon\sqrt{|M(\tau_{\epsilon,\delta})|}/(v(\tau_{\epsilon,\delta})+a(\tau_{\epsilon,\delta})))\to \delta$,
as $\epsilon\to 0+$.

\noindent (iii) If $v^2_0(t)/|M(t)| \sim c t^{-2q}$ as $t\to +\infty$ for some positive $c$ and $q$, then it holds under $\mathbb{P}_0$ that $\epsilon^{1/q} \tau_{\epsilon,\delta} \to (\sqrt{c}\Phi^{-1}(1-\delta/2))^{1/q}$ $a.s.$, and $\epsilon^{-1}(\mu_{\star}(\tau_{\epsilon,\delta})-\mu)\stackrel{\mathcal{L}}{\to} \mathcal{N}(0,1/(\Phi^{-1}(1-\delta/2))^2)$, as $\epsilon\to 0+$.
\end{proposition}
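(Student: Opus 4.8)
The plan is to transfer the deterministic analysis underlying Proposition~\ref{proposition asymptotic validity of tau0} to the present random setting, working pathwise on the $\mathbb{P}_0$-almost sure event $\Omega_0$ on which $\inf_{t}v^2(t)>0$, $\sup_{t}v^2(t)<+\infty$ and $v^2(t)/v^2_0(t)\to1$ all hold. Write $z:=\Phi^{-1}(1-\delta/2)$ (so $z>0$ since $\delta\in(0,1)$) and $\beta_\epsilon(t):=\epsilon\sqrt{|M(t)|}/(v(t)+a(t))$, so that the defining inequality of $\tau_{\epsilon,\delta}$ in \eqref{stopping based on general v(t)} reads $\beta_\epsilon(t)\ge z$ and $\tau_{\epsilon,\delta}$ is its first solution. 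Because $a(t)\downarrow0$ and $\inf_{t}v^2_0(t)>0$, on $\Omega_0$ one has $(v(t)+a(t))/v_0(t)\to1$, so the effective batch standard deviation $v(t)+a(t)$ is asymptotically interchangeable with the theoretical $v_0(t)$; this is the device that lets the three deterministic conclusions carry through.

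For Part~(i), fix $t\in\mathbb{N}$: on $\Omega_0$ the factor $\sqrt{|M(t)|}/(v(t)+a(t))$ is finite, so $\beta_\epsilon(t)\to0<z$ as $\epsilon\to0+$, the stopping inequality fails at $t$ for all small $\epsilon$, and hence $\tau_{\epsilon,\delta}>t$ eventually; arbitrariness of $t$ gives the divergence, and the case $\delta\to0+$ is identical since then $z\to+\infty$. For Part~(ii) I use an overshoot-vanishing argument. For small $\epsilon$ (so that $\tau_{\epsilon,\delta}\ge2$), minimality gives $\beta_\epsilon(\tau_{\epsilon,\delta})\ge z$ and $\beta_\epsilon(\tau_{\epsilon,\delta}-1)<z$. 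The ratio $r_t:=\beta_\epsilon(t)/\beta_\epsilon(t-1)$ is free of $\epsilon$, and the third part of Assumption~\ref{standing assumption 2} together with $(v(t)+a(t))/v_0(t)\to1$ forces $r_t\to1$ on $\Omega_0$; since $\tau_{\epsilon,\delta}\to+\infty$ by Part~(i), we obtain $z\le\beta_\epsilon(\tau_{\epsilon,\delta})=r_{\tau_{\epsilon,\delta}}\beta_\epsilon(\tau_{\epsilon,\delta}-1)<r_{\tau_{\epsilon,\delta}}z\to z$, whence $\beta_\epsilon(\tau_{\epsilon,\delta})\to z$ and $2(1-\Phi(\beta_\epsilon(\tau_{\epsilon,\delta})))\to2(1-\Phi(z))=\delta$.

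For the first assertion of Part~(iii), note $\beta_\epsilon(\tau_{\epsilon,\delta})^2=\epsilon^2|M(\tau_{\epsilon,\delta})|/(v(\tau_{\epsilon,\delta})+a(\tau_{\epsilon,\delta}))^2$; since $(v(t)+a(t))^2/v^2_0(t)\to1$ and $v^2_0(t)/|M(t)|\sim c\,t^{-2q}$, on $\Omega_0$ the reciprocal denominator satisfies $(v(t)+a(t))^2/|M(t)|=c\,t^{-2q}(1+o(1))$, so $\beta_\epsilon(\tau_{\epsilon,\delta})^2=c^{-1}(\epsilon^{1/q}\tau_{\epsilon,\delta})^{2q}(1+o(1))$; combining with $\beta_\epsilon(\tau_{\epsilon,\delta})^2\to z^2$ from Part~(ii) yields $(\epsilon^{1/q}\tau_{\epsilon,\delta})^{2q}\to cz^2$, i.e. $\epsilon^{1/q}\tau_{\epsilon,\delta}\to(\sqrt{c}\,z)^{1/q}$. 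For the weak limit of the output I decompose
\[
\epsilon^{-1}(\mu_{\star}(\tau_{\epsilon,\delta})-\mu)=\frac{1}{\beta^0_\epsilon(\tau_{\epsilon,\delta})}\cdot\sqrt{|M(\tau_{\epsilon,\delta})|}\,\frac{\mu_{\star}(\tau_{\epsilon,\delta})-\mu}{v_0(\tau_{\epsilon,\delta})},\qquad \beta^0_\epsilon(t):=\frac{\epsilon\sqrt{|M(t)|}}{v_0(t)}.
\]
Since $\beta^0_\epsilon(\tau_{\epsilon,\delta})=\beta_\epsilon(\tau_{\epsilon,\delta})(v(\tau_{\epsilon,\delta})+a(\tau_{\epsilon,\delta}))/v_0(\tau_{\epsilon,\delta})\to z$ on $\Omega_0$, the first factor tends to $1/z$ almost surely, so by Slutsky's theorem it remains to show that the second, martingale-CLT factor converges in law to $\mathcal{N}(0,1)$; the limit $\mathcal{N}(0,1/z^2)$ then follows.

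This last convergence is the main obstacle, since $\tau_{\epsilon,\delta}$ is a genuine $(\mathcal{F}_{m(t)})$-stopping time depending on $\epsilon$ and Theorem~\ref{theorem very base MCLT} cannot be invoked at a random index. I would condition on $\mathcal{F}_{m(\tau_{\epsilon,\delta})}$ and use that, on $\{\tau_{\epsilon,\delta}=t\}$, the resampled batch $\{X^{\star}_k\}_{k\in M(t)}$ is conditionally independent of the original $t$th batch, hence of the event $\{\tau_{\epsilon,\delta}=t\}$, given $\mathcal{F}_{m(t-1)}$; its conditional law is therefore exactly that of a fresh $t$th batch driven by the same past dynamics. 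A conditional (within-batch) form of the Lindeberg condition in Theorem~\ref{theorem very base MCLT} then gives $\mathbb{E}^{\star}_{m(\tau_{\epsilon,\delta})}[\exp(i\xi\sqrt{|M(\tau_{\epsilon,\delta})|}(\mu_{\star}(\tau_{\epsilon,\delta})-\mu)/v_0(\tau_{\epsilon,\delta}))]\to e^{-\xi^2/2}$ as $|M(\tau_{\epsilon,\delta})|\to+\infty$ (which holds as $\epsilon\to0+$ by Part~(i)), and bounded convergence of these conditional characteristic functions removes the conditioning to deliver the unconditional $\mathcal{N}(0,1)$ limit. The delicate points are matching the normalization $v_0(\tau_{\epsilon,\delta})$ to the conditional variance of the resampled batch and verifying the conditional Lindeberg condition along the random index; this is exactly the rigorous counterpart of the heuristic approximation \eqref{approximation by v_0(t)}, and is where the bulk of the work lies.
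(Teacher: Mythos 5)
Your proposal follows essentially the same route as the paper's own proof: the pathwise divergence argument for (i), the sandwich/overshoot argument comparing the criterion at $\tau_{\epsilon,\delta}$ and $\tau_{\epsilon,\delta}-1$ for (ii), the reformulation of that sandwich under $v^2_0(t)/|M(t)|\sim ct^{-2q}$ for the first claim of (iii), and the Slutsky decomposition with the martingale CLT applied to the resampled mean for the second claim of (iii). If anything, you are more explicit than the paper about the one genuinely delicate point—justifying the normal limit at the random index via conditioning on $\mathcal{F}_{m(\tau_{\epsilon,\delta})}$ and the conditional independence of the resampled batch—which the paper handles by a terse appeal to the MCLT \eqref{first MCLT} together with $\tau_{\epsilon,\delta}\to+\infty$ and $v^2(t)/v^2_0(t)\to1$ almost surely.
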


In relation to Proposition \ref{proposition asymptotic validity 1} (ii), we stress that $\mathbb{P}_0(|\mu( \tau_{\epsilon,\delta})-\mu|>\epsilon)\not\to \delta$ in general, as the random variable $\mu(\tau_{\epsilon,\delta})$ is not identical in law to the empirical batch mean $\mu(t)$ at a usual deterministic batch $t$ in the presence of the stopping time $\tau_{\epsilon,\delta}$.
Conversely, the conditional standard deviation $s(\tau_{\epsilon,\delta})$ at the stopping batch $\tau_{\epsilon,\delta}$ cannot be replaced by its regenerated version, that is, $v_{\star}^2(\tau_{\epsilon,\delta})=|M(\tau_{\epsilon,\delta})|^{-1}\sum_{k\in M(\tau_{\epsilon,\delta})} {\rm Var}_{k-1}^{\star}(X^{\star}_k)$, since the regenerated batch $\{X^{\star}_k\}_{k\in M(\tau)}$ is conditionally independent of the stopping batch $\tau_{\epsilon,\delta}$ on the stopping-time $\sigma$-field $\mathcal{F}_{m(\tau_{\epsilon,\delta})}$.

With Proposition \ref{proposition asymptotic validity 1} being presented, of practical interest is the non-asymptotic regime \cite{BHST, FLYA2012} in which one generates a segment $\{X_k\}_{k\in M(t)}$ of the sequence, one batch at a time forward until, and only until, the criterion \eqref{stopping based on general v(t)} is satisfied, with precision $\epsilon$ and error probability $\delta$ fixed.


\subsubsection{Conditional batch variances}
\label{subsection Stopping rule based on conditional batch variance}

A candidate for the sequence $\{v(t)\}_{t\in\mathbb{N}}$ is based on the conditional batch variance and standard deviation, defined as 
\begin{equation}\label{conditional batch variance}
v_1^2(t):=\frac{1}{|M(t)|}\sum_{k\in M(t)} {\rm Var}_{k-1}\left(X_k\right)
=\frac{1}{|M(t)|}\sum_{k\in M(t)} \mathbb{E}_{k-1}\left[X_k^2 \right]-\mu^2,
\end{equation}
and $v_1(t):=(v_1^2(t))^{1/2}$ for $t\in \mathbb{N}$, where we have applied the base assumption \eqref{base assumption} for the evaluation of the conditional variance.
The conditional batch variance here is expected to behave in a somewhat similar manner to the theoretical one in the sense that it is unbiased for the theoretical batch variance \eqref{theoretical batch variance}, that is, $\mathbb{E}_0[v_1^2(t)]=v^2_0(t)$ for all $t\in \mathbb{N}$.
Based on the availability of the sample $\{X_k\}_{k\in M(t)}$ alone, the conditional batch variance and standard deviation \eqref{conditional batch variance} are, in principle, more implementable than the theoretical ones \eqref{theoretical batch variance}, whereas the reality is that neither is generally implementable, with a few exceptions (such as the one examined in Section \ref{section ARCH(1)}, where the conditional one \eqref{conditional batch variance} happens to be available in an implementable form).

For later use, we amend the criterion \eqref{stopping based on general v(t)} with $v(t)$ replaced by the conditional counterpart $v_1(t)$, as follows:  
\begin{equation}\label{stopping based on s(t)}
  \tau^a_{\epsilon,\delta}:=\min\left\{t\in \mathbb{N}:\,2\left(1-\Phi\left(\frac{\epsilon \sqrt{|M(t)|}}{v_1(t)+a(t)}\right)\right)\le \delta\right\},
\end{equation}
as a stopping batch index.
 



\subsubsection{Empirical batch variances}
\label{subsection Stopping rule based on empirical batch variance}

A more realistic candidate for the sequence $\{v(t)\}_{t\in \mathbb{N}}$ can be constructed based on the empirical batch variance and standard deviation, respectively, defined as 
\begin{equation}\label{empirical batch variance}
v_2^2(t):=\frac{1}{|M(t)|-1} \sum_{k \in M(t)} \left(X_k-\mu(t)\right)^2,\quad v_2(t):=\left(v_2^2(t)\right)^{1/2},\quad t\in \mathbb{N},
\end{equation}
where $\mu(t)$ is the (implementable) empirical batch mean \eqref{empirical batch mean}, rather than the (unknown) constant $\mu$ in the base assumption \eqref{base assumption}.
The relevance is that as opposed to the conditional batch variance and standard deviation \eqref{conditional batch variance}, the empirical batch variance $v_2^2(t)$ and standard deviation $v_2(t)$ here are implementable as soon as the sample $\{X_k\}_{k\in M(t)}$ is given, yet with unbiasedness $\mathbb{E}_0[v_2^2(t)]=v^2_0(t)$ holding true.
An empirical version of the martingale central limit theorem can be constructed on the basis of the empirical standard deviation \eqref{empirical batch variance}, as follows. 

\begin{theorem}\label{theorem empirical MCLT}
In addition to the conditions imposed in Theorem \ref{theorem base MCLT}, let the following two conditions hold: $\limsup_{k\to +\infty} \mathbb{E}_0[|X_k|^4]<+\infty$ and $\lim_{t\to +\infty}|M(t)|^{-1}\sum_{k\in M(t)}(X_k^2-\mathbb{E}_{k-1}[X_k^2])= 0$, $\mathbb{P}_0\mbox{-}a.s.$
It then holds that $v_2^2(t)/v_1^2(t)\stackrel{\mathbb{P}_0}{\to}1$.
\end{theorem}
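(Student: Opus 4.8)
The plan is to reduce the claim to showing that the \emph{additive} gap $v_2^2(t)-v_1^2(t)$ vanishes. Since the hypothesis $\inf_{t\in\mathbb{N}}v_1^2(t)>0$ is carried over from Theorem \ref{theorem base MCLT} (with the choice $v(t)=v_1(t)$), the ratio then inherits the convergence through $v_2^2(t)/v_1^2(t)-1=(v_2^2(t)-v_1^2(t))/v_1^2(t)$. First I would apply the elementary sample-variance identity to rewrite the empirical batch variance in terms of raw second moments,
\[
v_2^2(t)=\frac{|M(t)|}{|M(t)|-1}\left(\frac{1}{|M(t)|}\sum_{k\in M(t)}X_k^2-\mu(t)^2\right),
\]
which holds algebraically regardless of any dependence between $\mu(t)$ and the $X_k$. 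Next I would insert the conditional second moments by writing $|M(t)|^{-1}\sum_{k\in M(t)}X_k^2=v_1^2(t)+\mu^2+|M(t)|^{-1}\sum_{k\in M(t)}(X_k^2-\mathbb{E}_{k-1}[X_k^2])$, using the defining relation $v_1^2(t)+\mu^2=|M(t)|^{-1}\sum_{k\in M(t)}\mathbb{E}_{k-1}[X_k^2]$ from \eqref{conditional batch variance}; the second additional hypothesis makes the last sum tend to $0$, $\mathbb{P}_0$-a.s.

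The remaining ingredient is the strong consistency $\mu(t)\to\mu$ (hence $\mu(t)^2\to\mu^2$), $\mathbb{P}_0$-a.s. Here the fourth-moment hypothesis $\limsup_k\mathbb{E}_0[|X_k|^4]<+\infty$ enters: by Jensen's inequality applied to the concave square root it yields, for all large $k$, the uniform bound $\mathrm{Var}_0(X_k)\le\mathbb{E}_0[X_k^2]\le(\mathbb{E}_0[X_k^4])^{1/2}\le C$, so that $\sum_{k\in M(t)}(k-m(t-1))^{-2}\mathrm{Var}_0(X_k)\le C\sum_{j\ge1}j^{-2}<+\infty$, which verifies the hypothesis of Theorem \ref{theorem law of large numbers}(ii) and delivers the almost-sure law of large numbers. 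Substituting these facts into the identity gives
\[
v_2^2(t)-v_1^2(t)=\frac{1}{|M(t)|-1}v_1^2(t)+\frac{|M(t)|}{|M(t)|-1}\Big(\mu^2-\mu(t)^2+\frac{1}{|M(t)|}\sum_{k\in M(t)}(X_k^2-\mathbb{E}_{k-1}[X_k^2])\Big).
\]
Since $|M(t)|\to+\infty$ (Assumption \ref{standing assumption M}) and $\sup_{t\in\mathbb{N}}v_1^2(t)<+\infty$, the Bessel-correction factor $|M(t)|/(|M(t)|-1)\to1$ and the residual $v_1^2(t)/(|M(t)|-1)\to0$, while the bracketed term tends to $0$ almost surely by the two facts just established. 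Hence $v_2^2(t)-v_1^2(t)\to0$, $\mathbb{P}_0$-a.s., and dividing by $v_1^2(t)\ge\inf_{t\in\mathbb{N}}v_1^2(t)>0$ yields $v_2^2(t)/v_1^2(t)\to1$, $\mathbb{P}_0$-a.s., which is even stronger than the asserted convergence in probability.

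I expect the only genuine subtlety to be the role of the fourth-moment condition: it is not needed for the algebra but is exactly what upgrades the boundedness of the \emph{averaged} variance $v_0^2(t)$ to the \emph{per-index} summability required by Theorem \ref{theorem law of large numbers}(ii), thereby securing $\mu(t)\to\mu$ almost surely rather than merely in mean square. Some care is also needed to confirm that the conditions $\inf_{t\in\mathbb{N}}v_1^2(t)>0$ and $\sup_{t\in\mathbb{N}}v_1^2(t)<+\infty$ (imposed through Theorem \ref{theorem base MCLT} with $v(t)=v_1(t)$) are in force, since both the control of the denominator and the vanishing of the Bessel residual rely on them.
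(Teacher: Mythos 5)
Your proof is correct, and its skeleton coincides with the paper's: both arguments reduce the ratio to the additive gap $v_2^2(t)-v_1^2(t)$, expand the sample variance, kill the term $|M(t)|^{-1}\sum_{k\in M(t)}(X_k^2-\mathbb{E}_{k-1}[X_k^2])$ by the second hypothesis, and dispose of $\mu^2-\mu(t)^2$ through the law of large numbers and continuity. The execution, however, differs in ways worth recording. First, your identity is exact and places the (assumed bounded) quantity $v_1^2(t)$ in the vanishing Bessel term, with no cross term in $X_k-\mu$; the paper instead displays a decomposition containing $|M(t)|^{-1}v_2^2(t)$ and $2\mu|M(t)|^{-1}\sum_{k\in M(t)}(X_k-\mu)$, and in fact that displayed identity double-counts the cross term (it cancels in exact algebra) -- harmless, since the term vanishes, but it obliges the paper to invoke Lemma \ref{lemma SLLN uncorrelated} (the strong law for asymptotically uncorrelated sequences) for two of its terms, an appeal your route avoids entirely. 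Second, you make explicit where the fourth-moment condition enters: it upgrades the \emph{averaged} bound $\sup_t v_0^2(t)<+\infty$ of Assumption \ref{standing assumption 2} to the \emph{per-index} bound $\sup_k \mathrm{Var}_0(X_k)<+\infty$, which is exactly what the hypothesis of Theorem \ref{theorem law of large numbers}(ii) requires; the paper leaves this verification implicit (it is also what legitimizes its use of Lemma \ref{lemma SLLN uncorrelated}), so your treatment closes a step the paper glosses over. Third, the denominators are controlled differently: you use $\inf_t v_1^2(t)>0$ taken from Theorem \ref{theorem base MCLT} with $v(t)=v_1(t)$ (consistent with how the paper itself cites that theorem), while the paper combines $\inf_t v_0^2(t)>0$ with $v_1^2(t)/v_0^2(t)\stackrel{\mathbb{P}_0}{\to}1$; as a consequence your argument delivers almost-sure convergence of the ratio, strictly stronger than the stated convergence in probability, whereas the paper's reduction yields only the in-probability statement. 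In short, both routes prove the theorem; yours is more self-contained and gives a sharper conclusion, while the paper's reuses auxiliary machinery (Lemma \ref{lemma SLLN uncorrelated}) that it needs elsewhere anyway.
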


For later use, we amend the criterion \eqref{stopping based on general v(t)} with $v(t)$ replaced by the empirical counterpart $v_2(t)$, as follows:  
\begin{equation}\label{stopping based on sigma(t)}
  \tau^b_{\epsilon,\delta}:=\min\left\{t\in \mathbb{N}:\,2\left(1-\Phi\left(\frac{\epsilon \sqrt{|M(t)|}}{v_2(t)+a(t)}\right)\right)\le \delta\right\},
\end{equation}
as a stopping batch index.

\section{Numerical illustrations}\label{section numerical illustations}



In this section, we present numerical examples to demonstrate the procedure and illustrate the effectiveness and the strong potential of the proposed stopping rules in making a satisfactory stop of the sequential experiment.
To this end, we first summarize the developed stopping rules.

For precision $\epsilon\in (0,+\infty)$ and error probability $\delta\in (0,1)$, one stops the experiment at a batch $\tau_{\epsilon,\delta}$ in which the probability of the undesirable event $\{|\mu(t)-\mu|>\epsilon\}$ is satisfactorily small for the first time, that is, $\tau_{\epsilon,\delta}=\min_{t\in\mathbb{N}}\{\mathbb{P}_{m(t-1)}(|\mu(t)-\mu|>\epsilon)\le \delta\}$.
Depending on how to approximate the error probability $\mathbb{P}_{m(t-1)}(|\mu(t)-\mu|>\epsilon)$ in the criterion, we have developed the following two stopping rules:
\begin{equation}\label{collection of taus}
\tau_{\epsilon,\delta} \leftarrow \begin{dcases}
\tau^a_{\epsilon,\delta}=\min\left\{t\in\mathbb{N}:2 \left( 1 - \Phi ( \epsilon \sqrt{|M(t)|}/(v_1(t)+a(t)) ) \right) \le \delta\right\},& \text{as of } \eqref{stopping based on s(t)},\\
\tau^b_{\epsilon,\delta}=\min\left\{t\in\mathbb{N}:\,2 \left( 1 - \Phi ( \epsilon\sqrt{|M(t)|} /(v_2(t)+a(t)) ) \right)\le  \delta\right\},& \text{as of } \eqref{stopping based on sigma(t)},
\end{dcases}
\end{equation}
where $v_1(t)$ and $v_2(t)$ are the conditional and empirical batch standard deviations, defined respectively in \eqref{conditional batch variance} and \eqref{empirical batch variance}.
To assess the proposed stopping rules, we take the following steps.

\begin{description}
\setlength{\parskip}{0cm}
\setlength{\itemsep}{0cm}

\item[(i)] Fix the precision $\epsilon \in (0,+\infty)$ and the error probability $\delta \in (0,1)$.

\item[(ii)] Fix the batches $\{m(t)\}_{t\in \mathbb{N}}$, the number of iid experiments $n$, and then $n$ distinct random seeds $(\omega_1,\cdots,\omega_n)(=:\omega)$.

\item[(iii)] For every random seed $\omega_k$, obtain the stopping batch index $\tau_{\epsilon,\delta}(\omega_k)$ in accordance with either \eqref{stopping based on s(t)} or \eqref{stopping based on sigma(t)},
and compute the indicator $\mathbbm{1}(|\mu_{\star}(\tau_{\epsilon,\delta})(\omega_k)-\mu|\leq \epsilon)$ over the $\tau_{\epsilon,\delta}(\omega_k)$th batch in accordance with \eqref{def of mu*}.

\item[(iv)] Compute an empirical error probability $p_{\epsilon,\delta}(\omega)$, the so-called reliability $R_{\epsilon,\delta}(\omega)$, and the complexity ${\rm CM}_{\epsilon,\delta}(\omega)$, respectively, as follows:
\begin{gather}\label{reliability complexity}
p_{\epsilon,\delta}(\omega):= \frac{1}{n} \sum_{k=1}^n \mathbbm{1}\left(|\mu_{\star}(\tau_{\epsilon,\delta})(\omega_k)-\mu|\leq \epsilon\right),\quad R_{\epsilon,\delta}(\omega):= \frac{p_{\epsilon,\delta}(\omega)}{1-\delta},\nonumber\\
\text{CM}_{\epsilon,\delta}(\omega) := \frac{1}{n}\sum_{k=1}^{n} \left[(\tau_{\epsilon,\delta}(\omega_k))^{\ell} - ({\tau_{\epsilon,\delta}(\omega_k)}-1)^{\ell}\right].
\end{gather}
\end{description}

In accordance with \cite{BHST}, we consider a stopping rule with its reliability $R_{\epsilon,\delta}(\omega)$ strictly less than $1$ unreliable.
In addition, the effectiveness of a stopping rule improves as its complexity $\text{CM}_{\epsilon,\delta}(\omega)$ decreases at  
the specified precision and error probability.
Throughout the following experiments, we fix $n=5\times 10^3$ to ensure enough realizations.
We note that the parameter $\ell$ in the complexity $\text{CM}_{\epsilon,\delta}(\omega)$ controls how severely we penalize large stopping times and is suppressed from its notation because we fix $\ell=5$ throughout.


With suitable constants $0<a_1<a_2<+\infty$ and $0<b_1<b_2<1$, we split the rectangle $[a_1, a_2]\times [b_1, b_2]$ into an logarithmically spaced grid of $10\times 10$ points, denoted by $\{(\epsilon_k,\delta_k)\}_{k\in \{1,\cdots,10\times 10\}}$.
On each of these $10\times 10$ combinations of precision and error probability, we compute the reliability $R_{\epsilon_k,\delta_k}(\omega)$ and the complexity ${\rm CM}_{\epsilon_k,\delta_k}(\omega)$, with the random seeds $\omega$ fixed throughout.

In both examples below (Sections \ref{section ARCH(1)} and \ref{section adaptive control variates}), the theoretical variance $v_0^2(t)$ converges to a positive constant, so that $|M(t+1)| \sim |M(t)|$ to satisfy the technical condition $v_0^2(t+1)/|M(t+1)| \sim v_0^2(t)/|M(t)|$ stated in Assumption \ref{standing assumption 2}.
Accordingly, we set the batch sizes as $M(t)=\{(t-1)^5+1,\cdots,t^5\}$ so that $|M(t)|\propto t^4$, striking a practical balance.
Slower growth (for instance, $|M(t)| \propto t^2$) reduces excess sampling but requires more frequent checks, whereas faster growth (for instance, $|M(t)| \propto t^6$) decreases the number of checks but risks overshooting the required sample size.


\subsection{ARCH(1)}\label{section ARCH(1)}

We begin our numerical illustrations with an ARCH(1) model, which provides an ideal test case for our stopping rules due to its non-trivial dependence structure and heavy-tailed behavior, as well as the straightforward implementation of the theoretical and conditional batch variances.
This setting poses natural challenges for sequential sampling methods, as the underlying dynamics combine temporal dependence through the ARCH structure with potentially extreme observations from the $t$-distribution.

Now, consider an ARCH(1) sequence $\{X_k\}_{k\in\mathbb{N}}$, which is generated recursively by $X_k=(\beta+\alpha X_{k-1}^2)^{1/2} V_k$, with $X_0=0$, $\alpha\in (0,1)$ and $\beta\in (0,+\infty)$.
Here, $\{V_k\}_{k\in\mathbb{N}}$ denotes a sequence of iid random variables following a scaled $t$-distribution with $\nu$ degrees of freedom ($\nu\in\{5,6,\cdots\}$), normalized to have unit variance, with its probability density function given as 
\[
 \frac{d}{dx}\mathbb{P}(V_1\le x)=\frac{\Gamma((\nu+1)/2)}{\sqrt{(\nu-2)\pi}\Gamma(\nu/2)}\left(1+\frac{x^2}{\nu-2}\right)^{-(\nu+1)/2}, \quad x\in\mathbb{R}.
\]
The theoretical and conditional batch variances for this model are given by
\[
 v^2_0(t) =\frac{\beta}{1-\alpha}\left(1-\frac{\alpha^{m(t-1)+1}-\alpha^{m(t)}}{2}\right),\quad 
 v_1^2(t)=\beta+\frac{\alpha}{|M(t)|}\sum_{k\in M(t)}X_{k-1}^2.
\]
The filtration $(\mathcal{F}_k)_{k\in\mathbb{N}_0}$ is constructed as $\mathcal{F}_k:=\sigma(\mathcal{N})\vee\sigma(V_1,\cdots,V_k)$ for all $k\in\mathbb{N}_0$, while the branched filtration $(\mathcal{F}_{k-1}^{\star})_{k\in M(\tau)}$ is constructed in the way of $\mathcal{F}_{k-1}^{\star}=\mathcal{F}_{m(\tau)}\vee \sigma(V_{m(\tau-1)+1}^{\star},\cdots,V_{k-1}^{\star})$ for all $k\in M(\tau)$.
We note that $\sigma$ here denotes the $\sigma$-field, rather than the empirical batch variance \eqref{empirical batch variance}.

In what follows, we fix $\alpha=0.03$, $\beta=0.3$, and $\nu=6$ degrees of freedom.
As we observe from Figure \ref{fig:variance_estimates}, after some initial fluctuations due to smaller batch sizes, the theoretical, conditional, and empirical variances exhibit similar behavior, with all those measures tracking each other closely.
This validates the use of the empirical variance $v_2^2(t)$ in the stopping rule, as it succeeds in approximating the theoretical variance, not to mention the conditional variance, in the long run.

\begin{figure}[H]
    \centering
    \includegraphics[width=0.6\textwidth]{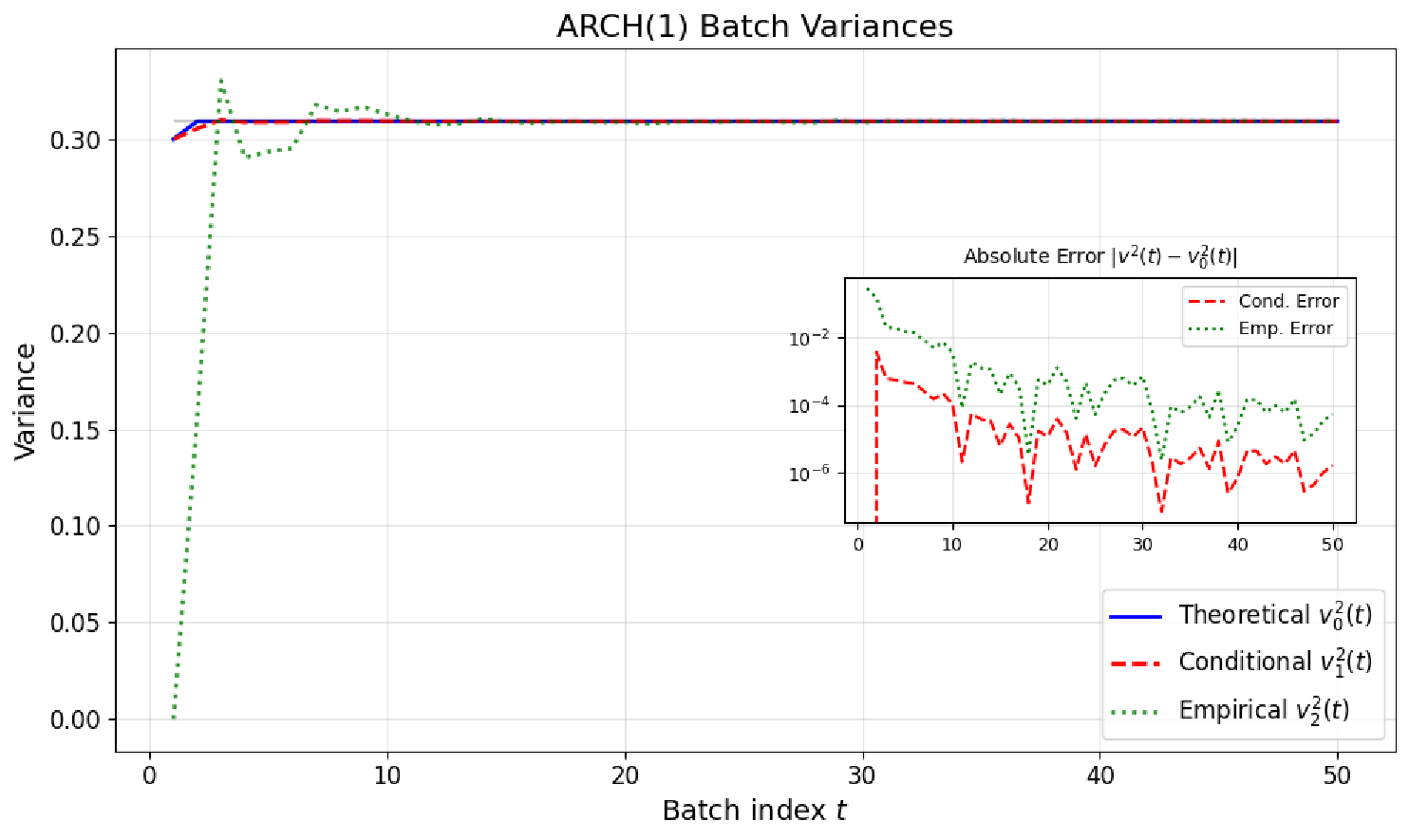}
    \caption{Typical realization of theoretical batch variance $v_0^2(t)$ (solid blue), conditional batch variance $v_1^2(t)$ (dashed red), and empirical batch variance $v_2^2(t)$ (dotted green) of the ARCH(1) process under consideration. The inset displays the absolute estimation errors $|v_1^2(t) - v_0^2(t)|$ (dashed red) and $|v_2^2(t) - v_0^2(t)|$ (dotted green) on a logarithmic scale, illustrating the convergence rates to the theoretical variance.}
    \label{fig:variance_estimates}
\end{figure}

We implement the empirical variance stopping rule $\tau^b_{\epsilon,\delta}$ based on the empirical batch variance $v_2^2(t)$ as of \eqref{collection of taus}, as it can be readily available unlike the theoretical and conditional variances.
Here, we set $a(t) = 1/t$, which helps to compensate the severe underestimation of the empirical variance $v_2^2(t)$ during early batches, such as $v_2^2(1)\approx 0$ observed in Figure \ref{fig:variance_estimates}.
The choice of $a(t)=1/t$ satisfies its theoretical requirement of monotonic vanishing and reduces the risk of premature stopping.

We assess its performance by examining reliability and computational complexity \eqref{reliability complexity} across various precision ($\epsilon$) and error probability ($\delta$) values. For each $(\epsilon_k,\delta_k)$ in the specified grid, we perform independent simulations to compute $\tau^b_{\epsilon_k,\delta_k}(\omega_j)$, generate new batches for the regenerated mean $\mu_{\star}(\tau_{\epsilon_k,\delta_k})(\omega_j)$, and evaluate success through $\mathbbm{1}(|\mu_{\star}(\tau_{\epsilon_k,\delta_k})(\omega_j)-\mu|\leq \epsilon_k)$.

\begin{figure}[H]
    \centering
    \includegraphics[width=\textwidth]{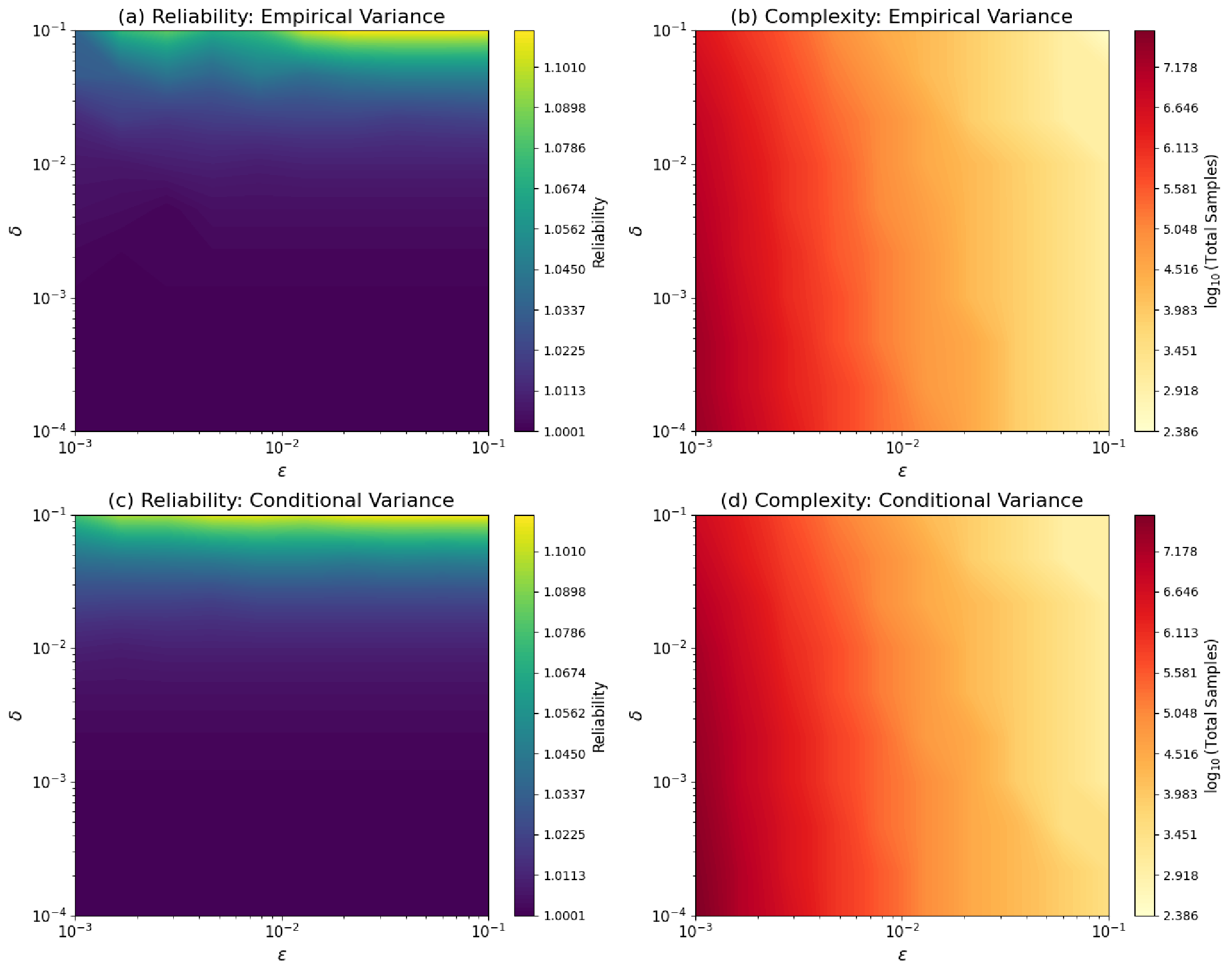} 
    \caption{The left column ((a) and (c)) represents the reliability $R_{\epsilon,\delta}$, while the right column ((b) and (d)) shows the complexity ${\rm CM}_{\epsilon,\delta}$ on a $\log_{10}$ scale.
    The top row ((a) and (b)) corresponds to the empirical-variance stopping rule ($\tau^b_{\epsilon,\delta}$), whereas the bottom row ((c) and (d)) corresponds to the conditional-variance stopping rule ($\tau^a_{\epsilon,\delta}$).}
    \label{fig:complexity_reliability}
\end{figure}

\begin{table}[H]
\centering
\renewcommand{\arraystretch}{1.2} 
\caption{Performance metrics comparison between the conditional-variance ($\tau^a_{\epsilon,\delta}$) and empirical-variance ($\tau^b_{\epsilon,\delta}$) stopping rules.}
\label{tab:stopping_rule_metrics}
\begin{tabular}{lrrr}
\toprule
\textbf{Metric} & \textbf{Mean} & \textbf{Min} & \textbf{Max} \\
\midrule
\multicolumn{4}{l}{Reliability $R_{\epsilon,\delta}$} \\
\hspace{3mm} empirical (Figure \ref{fig:complexity_reliability} (a))   & 1.0165 & 1.0000 & 1.1111 \\
\hspace{3mm} conditional (Figure \ref{fig:complexity_reliability} (c)) & 1.0191 & 1.0001 & 1.1111 \\
\addlinespace 
\multicolumn{4}{l}{Complexity $\rm CM_{\epsilon,\delta}$} \\
\hspace{3mm} empirical (Figure \ref{fig:complexity_reliability} (b))   & 1,965,883.60 & 243.00 & 24,300,000.00 \\
\hspace{3mm} conditional (Figure \ref{fig:complexity_reliability} (d)) & 3,339,877.00 & 1,024.00 & 45,435,424.00 \\
\bottomrule
\end{tabular}
\end{table}

In Figure \ref{fig:complexity_reliability} and Table \ref{tab:stopping_rule_metrics} above, we present our comprehensive numerical findings on the performance of the proposed stopping rules.
The plots of the reliability $R_{\epsilon,\delta}$ (Figure \ref{fig:complexity_reliability} (a) and (c)) demonstrate that our stopping rules consistently achieve the reliability staying strictly greater than $1$ across all parameter combinations.
As confirmed in Table \ref{tab:stopping_rule_metrics}, the reliability ranges from 1.000 to 1.111, satisfying the reliability criterion set up in \cite{BHST}.
Notably, the conditional-variance stopping rule $\tau^a_{\epsilon,\delta}$ exhibits a slightly higher mean (1.0191) compared to the empirical-variance stopping rule $\tau^b_{\epsilon,\delta}$ (1.0165), highlighting the conservative nature of using the true model parameters.
The reliability increases systematically with larger values of $\epsilon$ and $\delta$, reflecting more conservative behavior in less demanding scenarios while maintaining reliability even in the most challenging case with $\epsilon=\delta=10^{-3}$.

The heat maps of the complexity ${\rm CM}_{\epsilon,\delta}$ (Figure \ref{fig:complexity_reliability} (b) and (d)) reveal a pattern of computational cost across the parameter space.
The logarithmic scale clearly shows how the complexity $\rm CM_{\epsilon,\delta}$ increases as both $\epsilon$ and $\delta$ decrease.
The computational burden varies substantially, as detailed in Table \ref{tab:stopping_rule_metrics}, ranging from $2.43\times 10^2$ for the empirical-variance stopping rule with larger values of $\epsilon$ and $\delta$ to $4.54\times 10^7$ for the conditional-variance stopping rule with $\epsilon=\delta=10^{-3}$.
Directly comparing the methods, the empirical-variance stopping rule requires approximately $2.0\times 10^6$ samples on average versus $3.3\times 10^6$ for the conditional-variance stopping rule, demonstrating that the empirical approach achieves significant computational efficiency without violating the reliability requirement.
This scaling aligns with our theoretical predictions and provides practical guidance for computational requirements.

Let us stress that smooth transitions in both reliability and complexity across the parameter space indicate the stability and robustness of the proposed stopping rules.
The consistent achievement of the requirement ($R_{\epsilon,\delta}>1$), coupled with theoretically predicted complexity scaling, provides strong empirical validation of our methodology, particularly notable given the challenging features of temporal dependence and heavy-tailed distributions in the ARCH(1) setting.

\subsection{Adaptive control variates}\label{section adaptive control variates}

In this section, we examine the problem setting described in Examples \ref{examples of problem settings} (c) and \ref{examples of regenerating} (c).
To recall it, suppose, for $\Psi: (0,1)^d \to \mathbb{R}$, we wish to evaluate the integral $\mu=\int_{(0,1)^d} \Psi({\bf u}) d{\bf u}$, where the random element here is modeled via the uniform law on the unit hypercube $(0,1)^d$, which can be considered general enough thanks to the inverse transform random sampling method.
Here, without affecting the value of the integral, one can introduce a parameter ${\bm \theta}\in \mathbb{R}^d$  in a linear manner, as follows
\begin{equation}\label{control variates}
\mu=\int_{(0,1)^d}\Psi({\bf u}) d{\bf u}=\int_{(0,1)^d}\left(\Psi({\bf u})+\left\langle{\bm \theta}, {\bf u}-\mathbbm{1}_d/2\right\rangle\right)d{\bf u}, 
\end{equation}
under the quartic integrability condition $\int_{(0,1)^d}|\Psi({\bf u})|^4d{\bf u}<+\infty$ for technical reasons (Section \ref{appendix CV}).
Note that the control variate $U-\mathbbm{1}_d/2$ is centered with variance-covariance matrix $\int_{(0,1)^d}({\bf u}-\mathbbm{1}_d/2)^{\otimes 2}d{\bf u}=\mathbb{I}_d/12$, that is, the diagonal matrix of order $d$ with all diagonal elements of $1/12$.
Although one may introduce nonlinear control variates, we do not go in that direction in the present paper.
Now, define $V({\bm \theta}):=\int_{(0,1)^d}(\Psi({\bf u})+\langle{\bm \theta}, {\bf u}-\mathbbm{1}_d/2\rangle)^2 d{\bf u}$, which represents the second moment of the rightmost expression in \eqref{control variates}, parameterized by ${\bm \theta}$.
The estimator variance $V({\bm \theta})-\mu^2$ is, due to its quadratic structure in the parameter ${\bm \theta}$, minimized uniquely at  
\begin{equation}\label{optimizer of V}
{\bm \theta}^*:=\argmin_{{\bm \theta} \in \mathbb{R}^d} V({\bm \theta})= -12\int_{(0,1)^d}\Psi({\bf u})({\bf u}-\mathbbm{1}_d/2)d{\bf u},
\end{equation}
with the minimized second moment
\begin{equation}\label{minimal second moment}
\min_{{\bm \theta} \in \mathbb{R}^{d}} V({\bm \theta})=V({\bm \theta}^*)=V(0)-\frac{1}{12}\|{\bm \theta}^*\|^2,
\end{equation}
where the first term $V(0)$ represents the crude second moment.
Hence, the minimized second moment \eqref{minimal second moment} is strictly smaller than the crude second moment $V(0)$ unless ${\bm \theta}^*=0_d$, that is, the integrand $\Psi({\bf u})$ is linearly uncorrelated with the underlying random element ${\bf u}$, which is rather absurd from a practical point of view.
It is known 
that Monte Carlo averaging, albeit no longer iid due to the evolving adaptive sequence $\{{\bm \theta}_k\}_{k\in\mathbb{N}_0}$, can proceed concurrently along with the parameter search, as follows:   
\begin{equation}\label{original CV}
 \frac{1}{n}\sum_{k=1}^n \left(\Psi(U_k)+\left\langle{\bm \theta}_{k-1}, U_k-\mathbbm{1}_d/2\right\rangle\right)\to \mu,\quad {\bm \theta}_n:=-12\frac{1}{n}\sum_{k=1}^n \Psi(U_k)(U_k-\mathbbm{1}_d/2)(\to {\bm \theta}^*),
\end{equation}
with ${\bm \theta}_0:=0_d$, where $\{U_k\}_{k\in\mathbb{N}}$ is a sequence of iid uniform random vectors on $(0,1)^d$.
Note that both convergences in \eqref{original CV} hold true $\mathbb{P}_0$-$a.s.$ as $n\to +\infty$.

In the present context, instead of updating the parameter ${\bm \theta}$ at every iteration as in \eqref{original CV}, computing budget may be saved by updating it only once at the end of each batch (right before moving on to the next batch), as follows:
\begin{equation}\label{thetat CV}
 {\bm \theta}(t):=-12\frac{1}{|M(t)|} \sum_{k\in M(t)} \Psi(U_k)(U_k-\mathbbm{1}_d/2),\quad t\in \mathbb{N},
\end{equation}
with ${\bm \theta}(0)=0_d$.
By representing ${\bm \theta}(t)$ as a function of the batch index, we stress that it is updated only once for each batch. 
It clearly remains true that ${\bm \theta}(t)\to {\bm \theta}^*$, $\mathbb{P}_0$-$a.s.$ as $t\to +\infty$, just as in \eqref{original CV}, because this is still nothing but standard iid Monte Carlo methods for the rightmost expression of \eqref{optimizer of V}.
Here, the filtration $(\mathcal{F}_k)_{k\in\mathbb{N}_0}$ can be set as  $\mathcal{F}_k=\sigma(\mathcal{N})\vee\sigma(U_1,\cdots,U_k)$ for $k\in\mathbb{N}$, as well as the branched one $(\mathcal{F}_{k-1}^{\star})_{k\in M(\tau)}$ by $\mathcal{F}_{k-1}^{\star}=\mathcal{F}_{m(\tau_{\epsilon,\delta})}\vee \sigma(U_{m(\tau_{\epsilon,\delta}-1)+1}^{\star},\cdots,U_{k-1}^{\star})$ for all $k\in M(\tau_{\epsilon,\delta})$ on the iid regenerated batch $\{U_k^{\star}\}_{k\in M(\tau_{\epsilon,\delta})}$, where $\tau_{\epsilon,\delta}$ is a suitable stopping batch as in \eqref{collection of taus}. 
Then, for every $t\in\mathbb{N}$ and $k\in M(t)$, the base formulation (Assumption \ref{base assumption}) holds by setting $X_k=\Psi(U_k)+\left\langle{\bm \theta}(t-1), U_k-\mathbbm{1}_d/2\right\rangle$, as
\begin{align*}
 \mathbb{E}_{k-1}\left[X_k\right]&=\mathbb{E}_{k-1}\left[\Psi(U_k)+\left\langle{\bm \theta}(t-1), U_k-\mathbbm{1}_d/2\right\rangle\right]\\
 &=\mathbb{E}_{k-1}\left[\Psi(U_k)\right]+\left\langle{\bm \theta}(t-1),\mathbb{E}_{k-1}\left[ U_k-\mathbbm{1}_d/2\right]\right\rangle=\mu,
\end{align*}
since ${\bm \theta}(t-1)$ is $\mathcal{F}_{k-1}$-measurable and the uniform random vector $U_k$ is independent of the $\sigma$-field $\mathcal{F}_{k-1}$ for all $k\in M(t)$.
Hence, we aim to construct stopping rules for the algorithm:
\begin{align}\label{algorithm}
\mu(t)&:=\frac{1}{|M(t)|} \sum_{k\in M(t)} \left(\Psi(U_k)+\left\langle{\bm \theta}(t-1), U_k-\mathbbm{1}_d/2\right\rangle\right), \\ 
v_2^2(t)&:=\frac{1}{|M(t)|} \sum_{k \in M(t)} \left(\Psi(U_k)+\left\langle{\bm \theta}(t-1), U_k-\mathbbm{1}_d/2\right\rangle\right)^2-\mu^2(t),\nonumber
\end{align}
along with \eqref{thetat CV}, each of which 
only requires elementary operations.

For illustrative purposes, we examine a one-dimensional integration problem with integrand $\Psi(u)=u^2/2$ on $(0,1)$, which has theoretical mean $\mu=1/6$. Our analysis shows the crude Monte Carlo estimator has variance $V(0)=2.22\times 10^{-2}$, while the theoretically minimized variance with optimal control variate is $V(\theta^*)=1.39\times 10^{-3}$ at $\theta^*=-1/2$.

\begin{figure}[H]
    \centering
    \includegraphics[width=\textwidth]{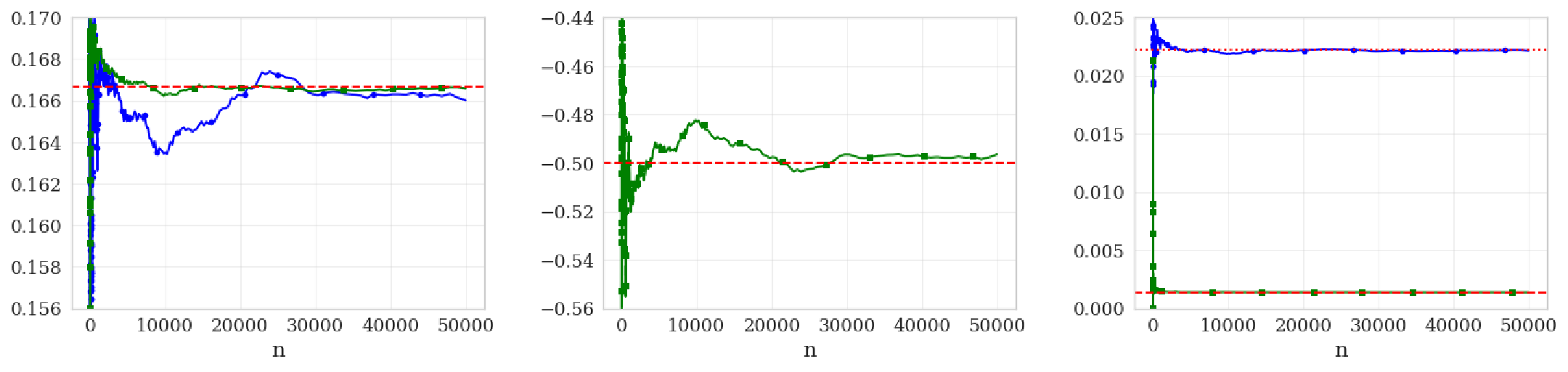} 
    \caption{
    We plot empirical means $\mu(n)$, showing convergence behavior of both crude Monte Carlo (blue) and control variates (green) estimators towards the theoretical mean $\mu=1/6$ (dotted red) in the leftmost, an evolution of the adaptive control variates parameter $\theta(n)$ with its limiting value $\theta^*=-1/2$ (dotted red) in the middle, and empirical variances $v^2_2(n)$, with (green) and without (blue) adaptive control variates, with their respective limiting values $V(0)=2.22\times 10^{-2}$ and $V(\theta^*)=1.39\times 10^{-3}$
    in the rightmost.
}
    \label{fig:samplepaths}
\end{figure}

In Figure \ref{fig:samplepaths} above, we present numerical results based upon the cumulative sample size $n$ to offer a more continuous view of the convergence behavior, as opposed to the batch index $t$ used in our theoretical development.
The plots provide several important insights, as follows.
The control variate estimator (green) demonstrates notably faster convergence to the theoretical mean $\mu=1/6$ (dotted red), compared to the crude Monte Carlo estimator (blue), with visibly reduced fluctuations from early stages.
The control variate parameter $\theta(n)$ shows initial volatility but steadily converges to the theoretical optimum $\theta^*=-0.5$ (dotted red).
This convergence corresponds directly to the improved performance in mean estimation and variance reduction.
The empirical variance plot clearly demonstrates the effectiveness of adaptive control variates, with the control variate estimator (green) achieving and maintaining significantly lower variance compared to the crude Monte Carlo counterpart (blue).

In Figure \ref{fig:complexity_reliability_example2} and Table \ref{tab:stopping_rule_metrics_acv} below, we present comprehensive performance metrics across different precision and error probability 
parameters. 
The results demonstrate that our stopping rules maintain robust performance even in the presence of adaptive 
parameter estimation ($\theta(t)$) and the inherent correlation structure between successive batches. 
We again set $a(t) = 1/t$ as the inflation factor in the stopping rule \eqref{stopping based on sigma(t)}.
The reliability consistently exceeds 1 (ranging from 1.0010 to 1.1111) despite the additional complexity introduced by the concurrent parameter optimization and variance reduction processes. The complexity metrics reveal relatively efficient computation 
compared to the ARCH(1) case, with a mean complexity of approximately 63,881 samples - about 25 times lower than the 
ARCH(1) scenario. 
This improved efficiency can be attributed to the variance reduction effect of the adaptive control 
variates, which progressively optimizes $\theta(t)$ to minimize the estimated variance. 
The complexity scaling remains predictable across the parameter space, ranging from about 252 samples for larger $\varepsilon$ and $\delta$ values to around 542,264 samples in the most demanding case ($\varepsilon = \delta = 10^{-3}$), demonstrating that the 
stopping rule effectively balances computational cost with precision requirements even as the control variate parameters 
are being adaptively optimized.
\begin{figure}[H]
    \centering
    \includegraphics[width=\textwidth]{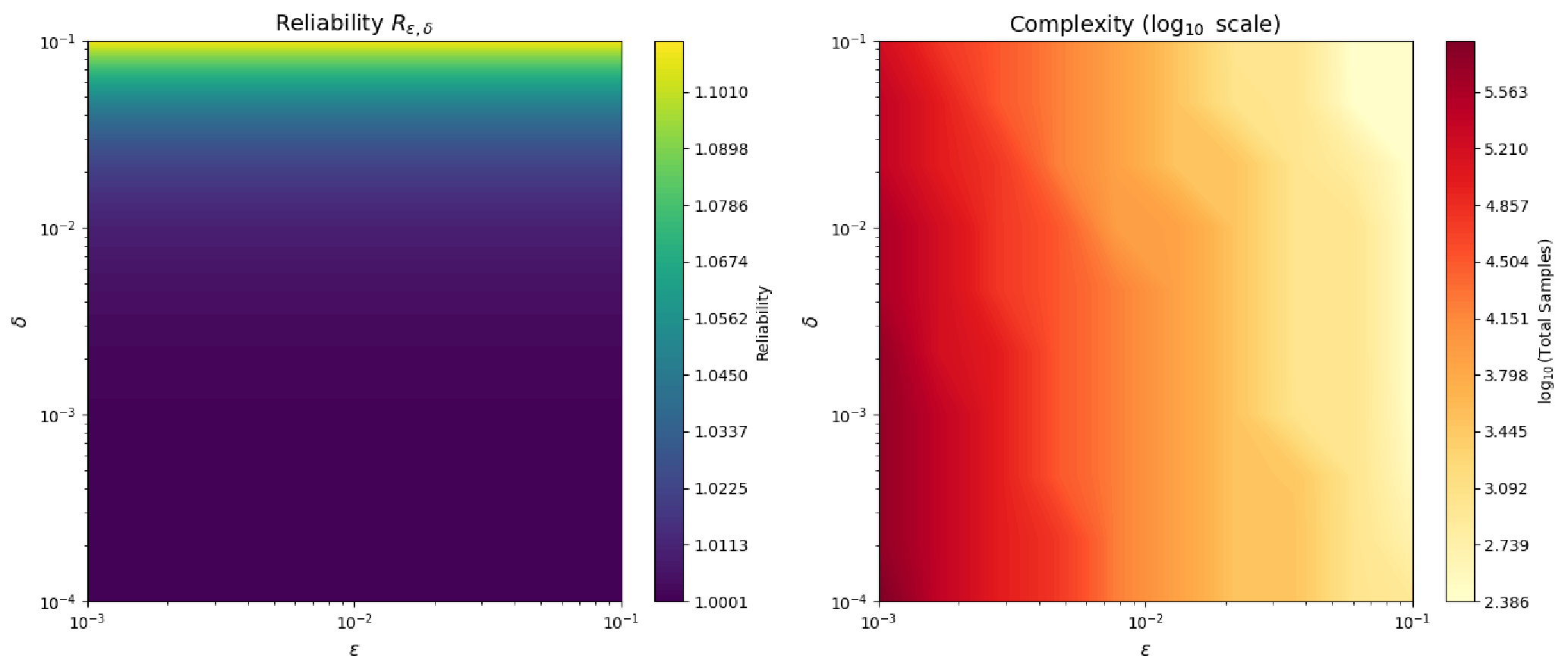} 
    \caption{The left plot represents the reliability $R_{\epsilon,\delta}$ and the right plot shows the complexity ${\rm CM}_{\epsilon,\delta}$ on a log10 scale.}
    \label{fig:complexity_reliability_example2}
\end{figure}

\begin{table}[H]
\centering
\begin{tabular}{lccc}
\hline
 & Mean & Min & Max \\
\hline
Reliability ($R_{\epsilon,\delta}$) & 1.0265 & 1.0010 & 1.1111 \\
Complexity ($\rm CM_{\epsilon,\delta}$) & 63880.9398 & 252.0000 & 542264.0200 \\
\hline
\end{tabular}
\caption{Performance metrics}
\label{tab:stopping_rule_metrics_acv}
\end{table}

\begin{remark}{\rm \label{remark ML}
It is worth noting that the proposed stopping rules for adaptive control variates have an interesting connection to averaged stochastic gradient descent (ASGD) \cite{Bottou2018}. While conventional SGD implementations often rely on validation performance for early stopping \cite{prechelt2012}, monitoring averaged iterates, which helps reduce the effect of the noise term, has proven beneficial in practice. This suggests potential value in developing statistically driven stopping criteria that leverage averaging, as in our framework, particularly for validation-free scenarios  \cite{duvenaud2016}.
\qed}\end{remark}

\section{Concluding remarks}
\label{section concluding remarks}

We have developed an easy-to-implement sequential stopping rule for Monte Carlo methods with a primal view towards the mean estimation of a non-iid sequence of martingale difference type in the non-asymptotic regime.
In contrast to standard iid Monte Carlo methods, where the classical central limit theorem suffices, the present aim relies largely on the martingale central limit theorem, which is, however, impractical in its original for most examples.
We have addressed this difficulty by carefully imposing a range of step-by-step technical conditions so that the corresponding stopping rules remain feasible.
We have provided numerical results, illustrating the applicability and effectiveness of the proposed stopping rules.

We conclude this work by highlighting two promising directions for future research.
The first involves improving the quality of the proposed stopping rules.
Unlike stopping rules based on the classical central limit theorem (such as \cite{BHST, FLYA2012}), a key challenge in enhancing the quality of stopping rules here arises from the lack of accurate finite-time approximations for the limiting law in the context of the martingale central limit theorem.
Specifically, obtaining Berry-Esseen-type bounds based on higher-order moments, and moreover presenting them in an implementable form, remains out of reach.
Although there is extensive literature on martingale central limit theorems and their rates of convergence - typically expressed as uniform bounds (such as \cite{10.1214/21-AIHP1182, FAN20191028, 10.3150/07-BEJ6116}) and, in some cases, as non-uniform bounds (such as \cite{EK1, pcc81, JOOS1991297}) - these results generally involve unspecifiable constants, limiting their practical applicability to the development of stopping rules, with an exception \cite{sydney}, which, to the best of our knowledge, provides the only implementable result of the uniform bound of Berry-Esseen type.

The second direction focuses on applying the proposed stopping rules across various fields.
In this study, we did not perform a comprehensive numerical analysis across a wide range of practical problem settings.
Given the advent of machine learning methods as briefly described in the introductory section and Remark \ref{remark ML}, it would be especially beneficial to adapt and apply the proposed stopping rules to these relevant contexts in a tailored manner.
Those would be valuable future research directions, deserving of their own separate investigation.

\section{Proofs}\label{section proofs}

To maintain the flow of the paper, we collect proofs here.
We try to keep the paper fairly self-contained, whereas we skip nonessential details of a somewhat routine nature in some instances to avoid overloading the Appendix.
In line with Figure \ref{figure what to output}, recall that $\mathbb{E}^{\star}_{k-1}$ is the expectation taken under the probability measure $\mathbb{P}^{\star}_{k-1}$ on the $\sigma$-field $\mathcal{F}^{\star}_{k-1}$ that makes the branched sequence $(X_1,\cdots,X_{m(\tau-1)},X^{\star}_{m(\tau-1)+1},\cdots,X^{\star}_{k-1})$ measurable for all $k\in M(\tau)$ and leaves the original stopping batch $(X_{m(\tau-1)+1},\cdots,X_{m(\tau)})$ out of its measurability.

\begin{proof}[Proof of Theorems \ref{theorem law of large numbers} and \ref{theorem unbiasedness}]
Theorem \ref{theorem law of large numbers} is elementary by the tower rule and the standing assumption $\mathbb{E}_{k-1}[X_k]=\mu$ (Assumption \ref{standing assumption 1}) for the unbiasedness, as well as due to, for instance, \cite[VII.9]{W1971} for the strong law of large numbers.
For Theorem \ref{theorem unbiasedness}, the first result can be derived as
\begin{align}\label{unbiasedness}
 \mathbb{E}_{m(\tau)}[\mu_{\star}(\tau)]&= \frac{1}{|M(\tau)|}\sum_{k\in M(\tau)}\mathbb{E}_{m(\tau)}\left[X_k^{\star}\right]=\frac{1}{|M(\tau)|}\sum_{k\in M(\tau)} \mathbb{E}_{m(\tau-1)}\left[X_k^{\star}\right]\nonumber\\
 &=\frac{1}{|M(\tau)|}\sum_{k\in M(\tau)} \mathbb{E}_{m(\tau-1)}\left[\mathbb{E}_{k-1}^{\star}\left[X_k^{\star}\right]\right]
 =\frac{1}{|M(\tau)|}\sum_{k\in M(\tau)} \mathbb{E}_{m(\tau-1)}[\mu], 
\end{align}
where we have applied the $\mathcal{F}_{m(\tau)}$-measurability of the stopping batch $\tau$ for the first equality, the regeneration procedure (illustrated in Figure \ref{figure what to output} and described alongside the statement of Theorem \ref{theorem unbiasedness}) for the second equality, and the tower rule with $\mathbb{E}^{\star}_{k-1}[X_k^{\star}]=\mathbb{E}_{k-1}[X_k]=\mu$ (Assumption \ref{standing assumption 1}) for the third and fourth equalities, due to $\mathcal{F}_{m(\tau-1)}\subseteq \mathcal{F}^{\star}_{k-1}$ for all $k\in M(\tau)=\{m(\tau-1)+1,\cdots,m(\tau)\}$.
The second result is then straightforward due to $\mathcal{F}_0\subseteq \mathcal{F}_{m(\tau)}$.
\end{proof}

\begin{proof}[Proof of Theorem \ref{theorem base MCLT}]
A standard martingale central limit theorem (for instance, \cite[Theorem 2.5 (a)]{ish} with $\xi_{t,k}=(X_k-\mu)/\sqrt{|M(t)|v^2_0(t)}$) reads in the present framework as follows.
If $v_1^2(t)/v^2_0(t)\stackrel{\mathbb{P}_0}{\to} 1$ and   
$(|M(t)|v^2_0(t))^{-1} \sum_{k\in M(t)} \mathbb{E}_{k-1}[|X_k-\mu|^2 \mathbbm{1}(|X_k-\mu|>\epsilon \sqrt{|M(t)|}v_0(t)) ]\stackrel{\mathbb{P}_0}{\to}0$ for every $\epsilon>0$,
then it holds that $\mathbb{P}_0(\sqrt{|M(t)|}(\mu(t)-\mu)/v_0(t)\le x) \to \Phi(x)$ for all $x\in \mathbb{R}$, as $t\to +\infty$.
This Lindeberg condition can be simplified as the one in Theorem \ref{theorem very base MCLT}, thanks to $\inf_{t\in\mathbb{N}}v^2_0(t)>0$ and $\sup_{t\in \mathbb{N}}v^2_0(t)<+\infty$ (Assumption \ref{standing assumption 2}).
The desired weak convergence \eqref{first MCLT} holds true by the Slutsky theorem due to the condition  $v_1^2(t)/v^2_0(t)\stackrel{\mathbb{P}_0}{\to} 1$.
\end{proof}


Next, we review a standard result on the strong law of large numbers for a sequence of asymptotically pairwise uncorrelated random variables.
This result has already been employed in Section \ref{section ARCH(1)}, as well as will be used in the proof of Theorem \ref{theorem empirical MCLT}.

\begin{lemma}\label{lemma SLLN uncorrelated}
Let $\{Y_k\}_{k\in\mathbb{N}}$ be a sequence of random variables, satisfying the following conditions: $\lim_{k\to +\infty}\mathbb{E}_0[Y_k](=:c)$ exists, $|\mathbb{E}_0[(Y_k-\mathbb{E}_0[Y_k])(Y_m-\mathbb{E}_0[Y_m])]|=\mathcal{O}(m^{-2})$ for all $k\in\mathbb{N}$, and $\sup_{k\in\mathbb{N}}{\rm Var}_0(Y_k)<+\infty$.
Then, it holds $\mathbb{P}_0$-$a.s.$ that $m^{-1}\sum_{k=1}^m \mathbb{E}_0[Y_k]\to c$, as $m\to +\infty$.
\end{lemma}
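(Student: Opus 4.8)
The plan is to prove the substantive content of the lemma, namely the almost sure convergence of the empirical averages $m^{-1}\sum_{k=1}^m Y_k\to c$ under $\mathbb{P}_0$. (Since $\mathcal{F}_0$ consists only of null sets, each $\mathbb{E}_0[Y_k]$ is $\mathbb{P}_0$-a.s.\ constant, so the convergence of the deterministic Ces\`aro means $m^{-1}\sum_{k=1}^m\mathbb{E}_0[Y_k]\to c$ is immediate from $\mathbb{E}_0[Y_k]\to c$ and Ces\`aro's lemma; it is the averaging of the random $Y_k$ that the decorrelation and variance hypotheses are there to handle, and that is what gets used in Theorem \ref{theorem empirical MCLT}.) First I would center the sequence by setting $Z_k:=Y_k-\mathbb{E}_0[Y_k]$, so that, writing $S_m:=\sum_{k=1}^m Z_k$ and invoking Ces\`aro for the mean part, it suffices to show $S_m/m\to 0$ $\mathbb{P}_0$-a.s.

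The first key step is a second-moment bound on $S_m$. Expanding the variance,
\[
{\rm Var}_0(S_m)=\sum_{k=1}^m {\rm Var}_0(Y_k)+2\sum_{1\le k<l\le m}\mathbb{E}_0[Z_k Z_l].
\]
The diagonal contributes at most $m\sup_{k}{\rm Var}_0(Y_k)=\mathcal{O}(m)$ by the third hypothesis. For the off-diagonal part I read the decorrelation hypothesis symmetrically as $|\mathbb{E}_0[Z_kZ_l]|\le C\,l^{-2}$ for some constant $C$ and all $k<l$ (decay in the larger index), which gives $\sum_{1\le k<l\le m}|\mathbb{E}_0[Z_kZ_l]|\le C\sum_{l=2}^m (l-1)\,l^{-2}=\mathcal{O}(\log m)$. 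Hence ${\rm Var}_0(S_m)=\mathcal{O}(m)$.

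I would then run the standard sparse-subsequence plus gap-filling argument. Along $m_j:=j^2$, Chebyshev's inequality gives $\mathbb{P}_0(|S_{m_j}|>\varepsilon m_j)\le {\rm Var}_0(S_{m_j})/(\varepsilon^2 m_j^2)=\mathcal{O}(1/j^2)$, which is summable in $j$, so Borel--Cantelli yields $S_{m_j}/m_j\to 0$ $\mathbb{P}_0$-a.s. To fill the gaps, set $\Delta_j:=\max_{m_j\le m<m_{j+1}}|S_m-S_{m_j}|$; bounding the increment by $\sum_{k=m_j+1}^{m_{j+1}-1}|Z_k|$ and applying Cauchy--Schwarz together with the bounded-variance hypothesis gives $\mathbb{E}_0[\Delta_j^2]\le (m_{j+1}-m_j)^2\sup_{k}{\rm Var}_0(Y_k)=\mathcal{O}(j^2)=\mathcal{O}(m_j)$, so the same Chebyshev--Borel--Cantelli reasoning forces $\Delta_j/m_j\to 0$ $\mathbb{P}_0$-a.s. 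Finally, for $m_j\le m<m_{j+1}$ one has $|S_m|/m\le(|S_{m_j}|+\Delta_j)/m_j\to 0$, which is the claim.

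The only genuine obstacle is the gap-filling: the subsequence step controls $S_m$ only along $\{j^2\}$, and one must check that the within-block fluctuations $\Delta_j$ are negligible relative to $m_j$. The choice $m_j=j^2$ is precisely what balances the block length (of order $\sqrt{m_j}$) against the linear variance growth so that both Borel--Cantelli series converge; I would verify carefully that the crude Cauchy--Schwarz bound on $\mathbb{E}_0[\Delta_j^2]$ — which does not even exploit the decorrelation hypothesis — already suffices here, and confirm that the symmetric reading of the decay condition is what makes the $\mathcal{O}(\log m)$ off-diagonal estimate, and hence ${\rm Var}_0(S_m)=\mathcal{O}(m)$, go through.
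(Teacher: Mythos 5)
Your proof is correct and takes essentially the same route as the paper's: center the variables, apply Chebyshev and Borel--Cantelli along the squares $m_j=j^2$, then de-square by controlling the fluctuations between consecutive squares; your reading of the conclusion (that the substantive claim is $m^{-1}\sum_{k=1}^m Y_k\to c$ $\mathbb{P}_0$-a.s.) is also what the paper's proof actually establishes and what is invoked in the proof of Theorem \ref{theorem empirical MCLT}. The only difference is one of completeness: where the paper merely asserts that the block term $m^{-1}\sum_{k=m+1}^{(n_m+1)^2}(Y_k-\mathbb{E}_0[Y_k])$ vanishes almost surely because $(n_m+1)^2\sim m$, your maximal-fluctuation estimate $\mathbb{E}_0[\Delta_j^2]=\mathcal{O}(m_j)$ via Cauchy--Schwarz, followed by Chebyshev--Borel--Cantelli, supplies the justification that the paper glosses over.
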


\begin{proof}[Proof of Lemma \ref{lemma SLLN uncorrelated}]
It holds by the Chebyshev inequality that for every $\epsilon>0$,
\begin{multline*}
\sum_{m=1}^{+\infty}\mathbb{P}_0\left(\left|\frac{1}{m^2} \sum_{k=1}^{m^2}\left(Y_k-\mathbb{E}_0\left[Y_k\right]\right)\right| \geq \epsilon\right) =\sum_{m=1}^{+\infty}\frac{1}{\epsilon^2 m^4}\sum_{k=1}^{m^2} {\rm Var}_0(Y_k)\\
+\sum_{m=1}^{+\infty} \frac{1}{\epsilon^2 m^4} \sum_{k_1\ne k_2}^{m^2}\mathbb{E}_0\left[\left(Y_{k_1}-\mathbb{E}_0\left[Y_{k_1}\right]\right)\left(Y_{k_2}-\mathbb{E}_0\left[Y_{k_2}\right]\right)\right] <+\infty,
\end{multline*}
where we have applied asymptotic pairwise uncorrelation.
Hence, the Borel-Cantelli lemma yields
$\lim_{m \to +\infty} m^{-2} \sum_{k=1}^{m^2}(Y_k-\mathbb{E}_0[Y_k])=0,$ $\mathbb{P}_0$-$a.s.$
In order to de-square the number of summands $m^2$, let $m\in\mathbb{N}$ and let $n_m$ be the smallest index such that $m\in\{n_m^2, \cdots, (n_m+1)^2\}$.
Then, decompose the sum into two parts: 
\[
\frac{1}{m} \sum_{k=1}^{m}\left(Y_k-\mathbb{E}_0\left[Y_k\right]\right)=\frac{1}{m} \sum_{k=1}^{(n_{m}+1)^2}\left(Y_k-\mathbb{E}_0\left[Y_k\right]\right)-\frac{1}{m} \sum_{k=m+1}^{(n_{m}+1)^2}\left(Y_k-\mathbb{E}_0\left[Y_k\right]\right),
\]
each of which vanishes $\mathbb{P}_0$-$a.s.$ as $m\to +\infty$, because $(n_m+1)^2\sim m$ as $m\to +\infty$.
\end{proof}

\begin{proof}[Proof of Theorem \ref{theorem empirical MCLT}]
Thanks to $\inf_{t\in\mathbb{N}}v^2_0(t)>0$ (Assumption \ref{standing assumption 2}) and $v_1^2(t)/v^2_0(t)\stackrel{\mathbb{P}_0}{\to}1$ (Theorem \ref{theorem base MCLT}), it suffices to show that $v_2^2(t)-v_1^2(t)\stackrel{\mathbb{P}_0}{\to} 0$.
Let $\{\xi_k\}_{k\in\mathbb{N}}$ and $\{\eta_k\}_{k\in\mathbb{N}}$ be sequences of random variables defined by $\xi_k:=X_k-\mu$ and $\eta_k:=X_k^2-\mathbb{E}_{k-1}[X_k^2]$, with which we obtain
\[
v_2^2(t)-v_1^2(t)=\frac{1}{|M(t)|}v_2^2(t)+\frac{2\mu}{|M(t)|}\sum_{k \in M(t)} \xi_k + \frac{1}{|M(t)|}\sum_{k \in M(t)}\eta_k+\mu^2-\mu^2(t),\quad t\in \mathbb{N}.
\]
It holds by Lemma \ref{lemma SLLN uncorrelated} that the first two terms tend to vanish.
The third term tends to zero due to the second condition given in Theorem \ref{theorem empirical MCLT}. 
Finally, it holds by Theorem \ref{theorem law of large numbers} and the continuous mapping theorem that the last two terms tend to cancel as $t\to +\infty$.
\end{proof}

Finally, we validate the asymptotic results of the three stopping rules \eqref{def of tau0}, \eqref{stopping based on general v(t)}, and \eqref{stopping based on sigma(t)}.
We deal with the first one \eqref{stopping based on general v(t)} separately from the remaining three, since the first one is deterministic whereas the other two are probabilistic along with the strictly positive inflation $a(t)$.

\begin{proof}[Proof of Proposition \ref{proposition asymptotic validity of tau0}]
The first claim is trivial, since every element of $\{X_k\}_{k\in\mathbb{N}}$ is not degenerate due to $\inf_{t\in\mathbb{N}}v^2_0(t)>0$ (Assumption \ref{standing assumption 2}).
Next, observe that $\mathbb{P}_0(|\mu(\tau_{\epsilon,\delta}^0-1)-\mu|>\epsilon)>\delta$ and $\mathbb{P}_0(|\mu(\tau_{\epsilon,\delta}^0)-\mu|>\epsilon)\le \delta$, by the definition \eqref{def of tau0}.
Along with $\tau_{\epsilon,\delta}^0\to +\infty$ as $\epsilon\to 0+$ and the martingale central limit theorem \eqref{very first MCLT}, those can be approximated as follows: 
\begin{gather*}
    \delta\lesssim 2(1-\Phi(\epsilon\sqrt{|M(\tau_{\epsilon,\delta}^0-1)|}/v(\tau_{\epsilon,\delta}^0-1))),\\
    2(1-\Phi(\epsilon\sqrt{|M(\tau_{\epsilon,\delta}^0)|}/v(\tau_{\epsilon,\delta}^0)))\lesssim \delta,
\end{gather*}
for sufficiently small $\epsilon$.
Hence, we get
\begin{multline}\label{sandwich0}
 (\Phi^{-1}(1-\delta/2))^2\lesssim \frac{\epsilon^2 |M(\tau^0_{\epsilon,\delta})|}{v^2(\tau^0_{\epsilon,\delta})}
 =\frac{\epsilon^2 |M(\tau^0_{\epsilon,\delta}-1)|}{v^2(\tau^0_{\epsilon,\delta}-1)}\frac{|M(\tau^0_{\epsilon,\delta})|}{|M(\tau^0_{\epsilon,\delta}-1)|}\frac{v^2(\tau^0_{\epsilon,\delta}-1)}{v^2(\tau^0_{\epsilon,\delta})}\\
 \lesssim \Phi^{-2}(1-\delta/2)\frac{|M(\tau^0_{\epsilon,\delta})|}{|M(\tau^0_{\epsilon,\delta}-1)|}\frac{v^2(\tau^0_{\epsilon,\delta}-1)}{v^2(\tau^0_{\epsilon,\delta})}\to (\Phi^{-1}(1-\delta/2))^2,
\end{multline}
as $\epsilon\to 0+$, due to $v^2(t+1)/|M(t+1)|\sim v^2_0(t)/|M(t)|$ as $t\to +\infty$ (Assumption \ref{standing assumption 2}).
This yields Part (ii).
Finally, for Part (iii), observe that
\begin{align*}
\epsilon^{1/q} \tau_{\epsilon,\delta}^0
&=\epsilon^{1/q}\min\left\{t\in\mathbb{N}:\,\mathbb{P}_0\left(\sqrt{|M(t)|}\frac{|\mu(t)-\mu|}{v_0(t)}> \sqrt{|M(t)|}\frac{\epsilon}{v_0(t)}\right)\le \delta\right\}\\
& \sim \inf\left\{t\in (0,+\infty):\,2\left(1-\Phi\left(\epsilon \sqrt{|M(\lfloor \epsilon^{-1/q}t\rfloor)|}/v(\lfloor \epsilon^{-1/q}t\rfloor)\right)\right)\le \delta\right\},
\end{align*}
which yields the first result as $\epsilon\to 0+$, due to the martingale central limit theorem \eqref{very first MCLT}, as well as the convergence $\epsilon\sqrt{|M(\lfloor \epsilon^{-1/q}t\rfloor)|}/v(\lfloor \epsilon^{-1/q}t\rfloor) \to t^q/\sqrt{c}$ as $\epsilon\to 0+$.
Finally, observe that
\begin{align}
\epsilon^{-1}(\mu(\tau_{\epsilon,\delta}^0)-\mu)&\sim \frac{(\tau_{\epsilon,\delta}^0)^q}{\sqrt{c}\Phi^{-1}(1-\delta/2)}(\mu(\tau_{\epsilon,\delta}^0)-\mu)\nonumber\\
&\sim \frac{1}{\Phi^{-1}(1-\delta/2)} \sqrt{|M(\tau_{\epsilon,\delta}^0)|}\frac{\mu(\tau_{\epsilon,\delta}^0)-\mu}{v(\tau_{\epsilon,\delta}^0)},\label{scaled second order for tau0}
\end{align}
which yields the desired weak convergence due to the martingale central limit theorem \eqref{very first MCLT} and $\tau_{\epsilon,\delta}^0\to +\infty$.
\end{proof}

\begin{proof}[Proof of Proposition \ref{proposition asymptotic validity 1}]
For Proposition \ref{proposition asymptotic validity 1}, the expression \eqref{stopping based on general v(t)} can be rewritten as $\tau_{\epsilon,\delta}=\min\{t\in \mathbb{N}:\,\epsilon\sqrt{|M(t)|}/(v(t)+a(t))\ge \Phi^{-1}(1-\delta/2)\}$, which yields $\tau_{\epsilon,\delta}\stackrel{a.s.}{\to}+\infty$ as $\epsilon\to 0+$ or $\delta\to 0+$, due to $|M(t)|\to +\infty$ (Assumption \ref{standing assumption M}), $v^2(t)/v^2_0(t)\stackrel{a.s.}{\to}1$ (Theorem \ref{theorem base MCLT}) and $\inf_{t\in\mathbb{N}}v^2_0(t)>0$ (Assumption \ref{standing assumption 2}).
With the aid of this expression, we get $\epsilon \sqrt{|M(\tau_{\epsilon,\delta})|}/(v(\tau_{\epsilon,\delta})+a(\tau_{\epsilon,\delta}))\ge \Phi^{-1}(1-\delta/2)$ at the time $m(\tau_{\epsilon,\delta})$, and $\epsilon \sqrt{|M(\tau_{\epsilon,\delta}-1)|}/(v(\tau_{\epsilon,\delta}-1)+a(\tau_{\epsilon,\delta}-1))<\Phi^{-1}(1-\delta/2)$ at the time $m(\tau_{\epsilon,\delta}-1)$, which together give, in a similar manner to \eqref{sandwich0}, 
\begin{align}\nonumber
 &\Phi^{-1}(1-\delta/2) \le \frac{\epsilon \sqrt{|M(\tau_{\epsilon,\delta})|}}{v(\tau_{\epsilon,\delta})+a(\tau_{\epsilon,\delta})}\nonumber\\
 &\, =\frac{\epsilon \sqrt{|M(\tau_{\epsilon,\delta}-1)|}}{v(\tau_{\epsilon,\delta}-1)+a(\tau_{\epsilon,\delta}-1)}\frac{\sqrt{|M(\tau_{\epsilon,\delta})|}}{\sqrt{|M(\tau_{\epsilon,\delta}-1)|}}\frac{v(\tau_{\epsilon,\delta}-1)+a(\tau_{\epsilon,\delta}-1)}{v(\tau_{\epsilon,\delta})+a(\tau_{\epsilon,\delta})}\nonumber\\
 &\, < \Phi^{-1}(1-\delta/2)\frac{\sqrt{|M(\tau_{\epsilon,\delta})|}}{\sqrt{|M(\tau_{\epsilon,\delta}-1)|}}\frac{v(\tau_{\epsilon,\delta}-1)+a(\tau_{\epsilon,\delta}-1)}{v(\tau_{\epsilon,\delta})+a(\tau_{\epsilon,\delta})}\stackrel{a.s.}{\to} \Phi^{-1}(1-\delta/2),\label{sandwich}
\end{align}
as $\epsilon \to 0+$, due to the first result $\tau_{\epsilon,\delta}\stackrel{a.s.}{\to}+\infty$, as well as  $v^2(t)/v^2_0(t)\stackrel{a.s.}{\to}1$ (Theorem \ref{theorem base MCLT}), $v^2(t+1)/|M(t+1)|\sim v^2_0(t)/|M(t)|$ and $\inf_{t\in\mathbb{N}}v^2_0(t)>0$ (Assumption \ref{standing assumption 2}), and $\lim_{t\to +\infty}a(t)=0$.
Hence, Part (ii) holds by the continuous mapping theorem since $\Phi$ is continuous on $\mathbb{R}$.  
With the additional condition, the first result of Part (iii) can be obtained by reformulating the equation \eqref{sandwich}.
The second result of Part (iii) holds by the asymptotic equivalence \eqref{scaled second order for tau0} with the regenerated empirical mean $\mu_{\star}$ and the stopping rule $\tau_{\epsilon,\delta}^a$,  
which yields the desired weak convergence due to the martingale central limit theorem \eqref{first MCLT} with the aid of $v^2(t)/v^2_0(t)\stackrel{a.s.}{\to}1$ (Theorem \ref{theorem base MCLT}) and $\tau_{\epsilon,\delta}^a\stackrel{a.s.}{\to}+\infty$ (Proposition \ref{proposition asymptotic validity 1} (i)), where the two asymptotic equivalences above hold almost surely, respectively, due to the first result and the additional condition of Part (iii).
\end{proof}

\small
\bibliographystyle{abbrv}
\bibliography{ref}

\newpage
\setcounter{page}{1}
\begin{center}
	{\bf \Large Supplementary materials: Stopping rules for Monte Carlo methods of martingale difference type}
 
\end{center}
\vspace{1em}

We provide supplementary materials for ``Stopping rules for Monte Carlo methods of martingale difference type,'' which we skipped in the main text to maintain the flow of the paper.

\appendix

\section{Verification of technical assumptions and conditions}\label{appendix technical justifications}

We here collect verifications of the standing assumptions (Assumptions \ref{standing assumption 1}, \ref{standing assumption M} and \ref{standing assumption 2}) and the technical conditions 
(Theorems \ref{theorem law of large numbers}, \ref{theorem very base MCLT}, \ref{theorem base MCLT} and  \ref{theorem empirical MCLT})
imposed along the way on the two examples examined in Section \ref{section numerical illustations}.

\subsection{ARCH(1)}
\label{appendix ARCH}

Recall that $X_k=(\beta+\alpha X_{k-1}^2)^{1/2}V_k$ for all $k\in \mathbb{N}$ with $X_0=0$ and $\{V_k\}_{k\in \mathbb{N}}$ is a sequence of iid random variables with the $t$-distribution with $\nu$ degrees of freedom for some $\nu\in\{5,6,\cdots\}$, scaled in such a way to satisfy $\mathbb{E}_0[V_1^2]=1$, that is, the common probability density function is given by
\begin{equation}\label{T pdf} 
 \frac{d}{dx}\mathbb{P}(V_1\le x)=\frac{\Gamma((\nu+1)/2)}{\sqrt{(\nu-2)\pi}\Gamma(\nu/2)}\left(1+\frac{x^2}{\nu-2}\right)^{-(\nu+1)/2},\quad x\in\mathbb{R}.
\end{equation}
In particular, we have $\mathbb{E}_0[|V_1|^4]=3(\nu-2)/(\nu-4)$.
It is trivial to derive that $\mathbb{E}_{k-1}[X_k]=(\beta+\alpha X_{k-1}^2)^{1/2}\mathbb{E}_{k-1}[V_k]=0$ and $\mathbb{E}_{k-1}[X_k^2]= \beta+\alpha X_{k-1}^2$, due to $X_{k-1}\in \mathcal{F}_{k-1}$, $V_k\perp \mathcal{F}_{k-1}$, and $\mathbb{E}_{k-1}[V_k^2]={\rm Var}_{k-1}(V_k)={\rm Var}_0(V_k)=1$ for all $k\in \mathbb{N}$.
Hence, we get $\mathbb{E}_0[X_k]=0$ (Assumption \ref{standing assumption 1} with $\mu=0$) and ${\rm Var}_0(X_k)=\mathbb{E}_0[X_k^2]=\beta +\alpha \mathbb{E}_0[X_{k-1}^2]=\cdots=\beta(1-\alpha^k)/(1-\alpha)>0$, due to $\alpha\in (0,1)$, $\beta \in (0,+\infty)$ and $X_0=0$.
Hence, the discrete-time stochastic process $\{X_k:\,k\in\mathbb{N}_0\}$ is asymptotically weakly stationary with mean $\mathbb{E}_0[X_k]=0$ and variance $\lim_{k\to +\infty}{\rm Var}_0(X_k)=\beta/(1-\alpha)$, due to $\alpha\in (0,1)$.
Hence, we get the convergence of the theoretical batch variance as
\[
v^2_0(t) =\frac{1}{|M(t)|} \sum_{k \in M(t)} {\rm Var}_0(X_k)=\frac{\beta}{1-\alpha}\left(1-\frac{\alpha^{m(t-1)+1}-\alpha^{m(t)}}{2}\right)\to \frac{\beta}{1-\alpha},
\]
as $t\to +\infty$ due to $m(t)\to +\infty$.
Hence, we get the assumptions $\inf_{t\in\mathbb{N}}v^2_0(t)>0$ and $\sup_{t\in \mathbb{N}}v^2_0(t)<+\infty$ (Assumption \ref{standing assumption 2}), and thus $\limsup_{t\to+\infty}\sum_{k\in M(t)}(k-m(t-1))^{-2}{\rm Var}_0(X_k)<+\infty$ (Theorem \ref{theorem law of large numbers}).
In addition to $|M(t)|\to +\infty$ (Assumption \ref{standing assumption M}), we need to set the batch sizes $\{M(t)\}_{k\in\mathbb{N}}$ such that $|M(t+1)|\sim |M(t)|$ to ensure $v^2(t+1)/M(t+1)\sim v^2_0(t)/M(t)$ (Assumption \ref{standing assumption 2}).

One can show that all the relevant conditions hold true if the constant $\alpha$ and the degree of freedom $\nu$ satisfy $3\alpha^2(\nu-2)/(\nu-4)<1$.
To verify the convergence of the conditional batch variance, observe first that for every $k\in \mathbb{N}$ and $m\in \{k+1,\cdots\}$,
\begin{align*}
 \mathbb{E}_0\left[\left(X_k^2-\mathbb{E}_0\left[X_k^2\right]\right)\left(X_m^2-\mathbb{E}_0\left[X_m^2\right]\right)\right]
 &=\mathbb{E}_0\left[\left(X_k^2-\mathbb{E}_0\left[X_k^2\right]\right)\mathbb{E}_k\left[X_m^2-\mathbb{E}_0\left[X_m^2\right]\right]\right]\\
 &=\mathbb{E}_0\left[\left(X_k^2-\mathbb{E}_0\left[X_k^2\right]\right)\left(\beta+\alpha \mathbb{E}_k \left[X_{m-1}^2\right]-\mathbb{E}_0\left[X_m^2\right]\right)\right]\\
 &=\cdots=\alpha\mathbb{E}_0\left[\left(X_k^2-\mathbb{E}_0\left[X_k^2\right]\right)\left(X_{m-1}^2-\mathbb{E}_0 \left[X_{m-1}^2\right]\right)\right]\\
 &=\cdots=\alpha^{m-k}{\rm Var}_0(X_k^2),
\end{align*}
which tends to zero a lot faster than $m^{-2}$, due to $|\alpha|<1$, if $\sup_{k\in\mathbb{N}}{\rm Var}_0(X_k^2)<+\infty$.
To verify $\sup_{k\in\mathbb{N}}{\rm Var}_0(X_k^2)<+\infty$, observe that for every $k\in\mathbb{N}$,
\begin{align*}
\mathbb{E}_0\left[|X_k|^4\right]&=\mathbb{E}_0\left[(\beta+\alpha X_{k-1}^2)^2\right]\mathbb{E}_0\left[|V_k|^4\right]\\
&=\frac{3(\nu-2)}{\nu-4}\left(\beta^2+2\beta\alpha \mathbb{E}_0\left[X_{k-1}^2\right]+\alpha^2 \mathbb{E}_0\left[|X_{k-1}|^4\right]\right)\\
&\sim \frac{3(\nu-2)}{\nu-4}\left(\beta^2\frac{1+\alpha}{1-\alpha}+\alpha^2 \mathbb{E}_0\left[|X_{k-1}|^4\right]\right).
\end{align*}
If $3\alpha^2(\nu-2)/(\nu-4)<1$, then we get
\begin{gather*}
 \mathbb{E}_0\left[|X_k|^4\right]\to \frac{3\beta^2(1+\alpha)(\nu-2)}{(1-\alpha)(\nu-4-3\alpha^2(\nu-2))},\\
 {\rm Var}_0\left(X_k^2\right)\to \frac{3\beta^2(1+\alpha)(\nu-2)}{(1-\alpha)(\nu-4-3\alpha^2(\nu-2))}-\frac{\beta^2}{(1-\alpha)^2},
\end{gather*}
as $k\to +\infty$.
Hence, by Lemma \ref{lemma SLLN uncorrelated}, we get $\lim_{t\to +\infty}|M(t)|^{-1}\sum_{k\in M(t)}X_{k-1}^2= \lim_{k\to +\infty}\mathbb{E}_0[X_k^2]=\beta/(1-\alpha)$.
That is, with $\mu=0$, we get 
\begin{align*}
v_1^2(t) &= \frac{1}{|M(t)|} \sum_{k \in M(t)} \mathbb{E}_{k-1}\left[X_k^2\right]-\mu^2 = \frac{1}{|M(t)|}\sum_{k\in M(t)}\left(\beta+\alpha X_{k-1}^2\right)\\
&=\beta+\frac{\alpha}{|M(t)|}\sum_{k\in M(t)}X_{k-1}^2\to \frac{\beta}{1-\alpha},
\end{align*}
$\mathbb{P}_0$-$a.s.$, as $t\to +\infty$.
With $v^2_0(t)\to \beta/(1-\alpha)$, the condition $v_1^2(t)/v^2_0(t)\stackrel{\mathbb{P}_0}{\to} 1$ (Theorem \ref{theorem base MCLT}) holds true.
In addition, the convergence of the sequence $\{\mathbb{E}_0[|X_k|^4]\}_{k\in\mathbb{N}}$ above implies the condition $\limsup_{k\to +\infty}\mathbb{E}_0[|X_k|^4]<+\infty$ (Theorem \ref{theorem empirical MCLT}). 
Now, with $c_{\nu}$ being a constant depending on $\nu$ changing its values from line to line, due to 
\begin{align*}
    \mathbb{E}_{k-1}\left[V_k^2\mathbbm{1}(|X_k|>\epsilon \sqrt{|M(t)|})\right]
    &=c_{\nu}
    \int_{x^2>\frac{\epsilon^2 |M(t)|}{\beta+\alpha X_{k-1}^2}}x^2\left(1+\frac{x^2}{\nu-2}\right)^{-(\nu+1)/2}dx\\
&    \sim c_{\nu}
    \left(\frac{\epsilon^2 |M(t)|}{\beta+\alpha X_{k-1}^2}\right)^{1-\nu/2},
\end{align*}
as $t\to +\infty$ with the aid of the probability density function \eqref{T pdf}, we get
\begin{align*}
 &\frac{1}{|M(t)|}\sum_{k\in M(t)}\mathbb{E}_{k-1}\left[X_k^2\mathbbm{1}(|X_k|>\epsilon \sqrt{|M(t)|})\right]\\
 &\quad =\frac{1}{|M(t)|}\sum_{k\in M(t)} \left(\beta+\alpha X_{k-1}^2\right)\mathbb{E}_{k-1}\left[V_k^2\mathbbm{1}(|X_k|>\epsilon \sqrt{|M(t)|})\right]\\
 &\quad \sim c_{\nu} \epsilon^{2-n}\sum_{k\in M(t)} \left(\frac{\beta+\alpha X_{k-1}^2}{|M(t)|}\right)^{\nu/2},
\end{align*}
which tends to $0$, $\mathbb{P}_0$-$a.s.$, since $\sum_{k\in M(t)}(\beta+\alpha X_{k-1}^2)/|M(t)|$ is convergent to a strictly positive constant and $\nu\in \{5,6,\cdots\}$.
This ensures the Lindeberg condition of Theorem \ref{theorem very base MCLT}.
Next, to ensure the second condition 
of Theorem \ref{theorem empirical MCLT}, observe that
\[
 \frac{1}{|M(t)|}\sum_{k\in M(t)}\left(X_k^2-\mathbb{E}_{k-1}\left[X_k^2\right]\right)=\frac{1}{|M(t)|}\sum_{k\in M(t)}X_k^2-\beta -\alpha \frac{1}{|M(t)|}\sum_{k\in M(t)}X_{k-1}^2\to 0,
\]
$\mathbb{P}_0$-$a.s.$, due to $|M(t)|^{-1}\sum_{k\in M(t)}X_{k-1}^2\to \beta/(1-\alpha)$.

\subsection{Adaptive control variates}\label{appendix CV}

The theoretical and conditional batch variances are given, respectively, by
\begin{align*}
 v^2_0(t)&=\frac{1}{|M(t)|}\sum_{k\in M(t)}\mathbb{E}_0\left[X_k^2\right]-\mu^2\\
 &=V(0)+2\left\langle \mathbb{E}_0[{\bm \theta}(t-1)], \mathbb{E}_0\left[\Psi(U_1)(U_1-\mathbbm{1}_d/2)\right]\right\rangle+\frac{1}{12}\mathbb{E}_0\left[\| {\bm \theta}(t-1)\|^2\right]-\mu^2,
\end{align*}
and
\begin{align*}
v_1^2(t)&=\frac{1}{|M(t)|} \sum_{k \in M(t)} \mathbb{E}_{k-1}\left[X_k^2\right]-\mu^2\\
&=V(0)+2\left\langle {\bm \theta}(t-1), \mathbb{E}_0\left[\Psi(U_1)(U_1-\mathbbm{1}_d/2)\right]\right\rangle+\frac{1}{12}\left\| {\bm \theta}(t-1)\right\|^2-\mu^2,
\end{align*}
for all $t\in\mathbb{N}$ and $k\in M(t)$.
By restricting the parameter domain to a sufficiently large yet compact one $\Theta(\subset \mathbb{R}^d)$ (which, at least, contains the initial state $0_d$ and the optimizer ${\bm \theta}^*$), it holds that 
$v^2_0(t)\to V({\bm \theta}^*)-\mu^2$ as well as $v_1^2(t)\to V({\bm \theta}^*)-\mu^2$ by the bounded convergence theorem since the search domain $\Theta$ is compact with the aid of ${\bm \theta}(t)\to {\bm \theta}^*$, $\mathbb{P}_0$-$a.s.$
Hence, we get $\inf_{t\in\mathbb{N}}v^2_0(t)>0$ and $\sup_{t\in \mathbb{N}}v^2_0(t)<+\infty$ (Assumption \ref{standing assumption 2}), as well as $\sum_{k\in M(t)}(k-m(t-1))^{-2}{\rm Var}_0(X_k)\sim \mathbb{E}_0[V({\bm \theta}^*)]/|M(t)|\to 0$ (Theorem \ref{theorem law of large numbers}).
One needs to set the batch sizes $\{M(t)\}_{t\in \mathbb{N}}$ satisfying $|M(t)|\to +\infty$ (Assumption \ref{standing assumption M}) and $|M(t+1)|\sim |M(t)|$, to ensure $v^2(t+1)/|M(t+1)|\sim v^2_0(t)/|M(t)|$ (Assumption \ref{standing assumption M}).
Clearly, the condition $v_1^2(t)/v^2_0(t)\stackrel{\mathbb{P}_0}{\to} 1$ (Theorem \ref{theorem base MCLT}) is satisfied.


Due to the quadratic integrability condition $\int_{(0,1)^d}|\Psi({\bf u})|^4d{\bf u}<+\infty$, it immediately holds that  $\limsup_{k\to +\infty}\mathbb{E}_0[|X_k|^4]<+\infty$ (Theorem \ref{theorem empirical MCLT}) and that $\mathbb{E}_{k-1}[|X_k-\mu|^4]$ is uniformly bounded from both below and above, since the search domain $\Theta$ and the uniform random vectors $\{U_k\}_{k\in \mathbb{N}}$ are bounded.
Hence, the Lindeberg condition (Theorem \ref{theorem very base MCLT}) holds by the convergence $|M(t)|^{-1}\sum_{k\in M(t)}\mathbb{E}_{k-1}[|X_k-\mu|^4]^{1/2}(\mathbb{P}_{k-1}(|X_k-\mu|>\epsilon \sqrt{|M(t)|}))^{1/2}\to 0$ (Theorem \ref{theorem very base MCLT}).
Finally, it is straightforward to show that $|M(t)|^{-1}\sum_{k\in M(t)}(X_k^2-\mathbb{E}_{k-1}[X_k^2])\to 0$, $\mathbb{P}_0$-$a.s.$ (Theorem \ref{theorem empirical MCLT}), due to ${\bm \theta}(t-1)\in \mathcal{F}_{k-1}$ and $U_k\perp\mathcal{F}_{k-1}$ for all $t\in \mathbb{N}$ and $k\in M(t)$.


\section{Comparisons with relevant stopping rules}\label{section comparisons with relevant stopping rules}

We here contrast the proposed framework with two relevant stopping rules in order to put the proposed ones into perspective.
In what follows, we denote by $\mathbb{D}((0,+\infty);\mathbb{R})$ the space of right-continuous functions in $\mathbb{R}$ with left limits defined on the open interval $(0,+\infty)$.

\subsection{Stopping rules in asymptotic regimes}

We have justified our formulations in an asymptotic manner along the way (Propositions \ref{proposition asymptotic validity of tau0} and \ref{proposition asymptotic validity 1})
in a similar spirit to the existing framework in the asymptotic regime \cite{PW1992}.
A key assumption can be rephrased in our context that the batch variances are convergent.
That is, given that a limiting unit-time variance exits ($\lim_{t\to +\infty}v^2_0(t)(=:c)$), the strong consistency holds ($v_1^2(t)\stackrel{a.s.}{\to} c$, or $v_2^2(t)\stackrel{a.s.}{\to} c$), or the functional weak law of large numbers holds ($\{s^2(\lfloor t/\epsilon\rfloor):\,t\in (0,+\infty)\}\stackrel{\mathcal{L}}{\to} c$, or $\{\sigma^2(\lfloor t/\epsilon\rfloor):\,t\in (0,+\infty)\}\stackrel{\mathcal{L}}{\to} c$ in $\mathbb{D}((0,+\infty);\mathbb{R})$).
Clearly, all those conditions are stronger than ours (Assumption \ref{standing assumption 2} and the additional conditions in Proposition \ref{proposition asymptotic validity 1}). 
Another key condition is the existence of a limiting continuous process of the scaled-centered estimation process weakly in $\mathbb{D}((0,+\infty);\mathbb{R}))$ in conjunction with the limiting unit-time variance.
To verify this condition in our context, consider the (non-batched) empirical mean $\mu_n:=n^{-1}\sum_{k=1}^n X_k$, instead of the batched one \eqref{empirical batch mean}, so that $\mu_n\to \mu$ almost surely by Theorem \ref{theorem law of large numbers}.
For $\epsilon\in (0,+\infty)$, define stochastic process $\{\mathcal{Y}_{\epsilon}(t):\,t\in (0,+\infty)\}$ by 
\[
\mathcal{Y}_{\epsilon}(t):=\epsilon^{-1/2}(\mu_{\lfloor t/\epsilon\rfloor}-\mu)=\epsilon^{-1/2}\frac{1}{\lfloor t/\epsilon\rfloor}\sum_{k=1}^{\lfloor t/\epsilon \rfloor}(X_k-\mathbb{E}_{k-1}[X_k]),\quad t\in (0,+\infty),
\]
which converges weakly to $\{W_t/t:\,t\in (0,+\infty)\}$ in $\mathbb{D}((0,+\infty);\mathbb{R})$, where $\{W_t:\,t\ge 0\}$ is a centered Brownian motion with ${\rm Var}_0(W_1)=c(=\lim_{t\to +\infty} v^2_0(t))$.
To derive this weak convergence, consider $\mathcal{X}_{\epsilon}(t):=\epsilon \lfloor t/\epsilon \rfloor \mathcal{Y}_{\epsilon}(t)=\epsilon^{1/2}\sum_{k=1}^{\lfloor t/\epsilon \rfloor}(X_k-\mathbb{E}_{k-1}[X_k])$.
The stochastic process $\{\mathcal{X}_{\epsilon}(t):\,t\in (0,+\infty)\}$ is centered with uncorrelated and asymptotically second-order stationary increments, due to  $\mathbb{E}_0[\mathcal{X}_{\epsilon}(t_1)]=0$, ${\rm Cov}_0(\mathcal{X}_{\epsilon}(t_2)-\mathcal{X}_{\epsilon}(t_1),\mathcal{X}_{\epsilon}(t_4)-\mathcal{X}_{\epsilon}(t_3))=0$ and 
\begin{align*} 
 {\rm Var}_0(\mathcal{X}_{\epsilon}(t_2)-\mathcal{X}_{\epsilon}(t_1))
 &=\epsilon(\lfloor t_2/\epsilon\rfloor -\lfloor t_1/\epsilon\rfloor)\frac{1}{\lfloor t_2/\epsilon\rfloor -\lfloor t_1/\epsilon\rfloor}\sum_{k=\lfloor t_1/\epsilon\rfloor+1}^{\lfloor t_2/\epsilon\rfloor}{\rm Var}_0(X_k)\\
 &\to (t_2-t_1)c,
\end{align*}
for all $0\le t_1\le  t_2\le t_3\le t_4$.
To further show that the limiting process is a Brownian motion, let $\{\epsilon_n\}_{n\in\mathbb{N}}$ and $\{h_n\}_{n\in\mathbb{N}}$ be sequences of positive constants with $\epsilon_n\to 0$ and $h_n\to 0$ as $n\to +\infty$ and let $\tau_n$ be a stopping time with respect to the stochastic process $\{\mathcal{X}_{\epsilon_n}(t):\,t\ge 0\}$ for all $n\in \mathbb{N}$, that is, $\{\tau_n\le t\}\in \sigma(\{\mathcal{X}_{\epsilon_n}(s):\,s\in [0,t]\})$.
Due to the martingale central limit theorems (Theorems \ref{theorem very base MCLT}, \ref{theorem base MCLT} and \ref{theorem empirical MCLT}), the limiting process is Gaussian and $\{\mathcal{X}_{\epsilon_n}(t):\,t\ge 0\}$ converges weakly in $\mathbb{D}([0,+\infty);\mathbb{R})$ to a centered Brownian motion, due to tightness $\mathbb{P}_0(| \mathcal{X}_{\epsilon_n}(\tau_n+h_n)-\mathcal{X}_{\epsilon_n}(\tau_n)|>m)\lesssim m^{-2}h_n \lim_{t\to +\infty}v^2_0(t)\to 0$ as $n\to +\infty$ for all $m>0$.
(See, for instance, \cite[Chapter 23]{kallenberg}.)

\subsection{Relative-precision stopping rules}

We here describe why a relative precision cannot be constructed in the present framework.
In the context of relative-precision stopping rules, the failure is represented by the probability $\mathbb{P}_{m(t-1)}(|\mu(t)-\mu|>\epsilon|\mu|)$ with $\epsilon\in (0,1)$, that is, the distance of the empirical mean from the true mean is larger than an $\epsilon$th fraction of the norm of the unknown mean $\mu$.
In a similar yet different manner to the progression \eqref{approximation by v_0(t)}, relative-precision stopping rules are to be designed under the assumption that the stopping batch $\tau$ is sufficiently large, as follows:
\begin{align*}
\delta<\mathbb{P}_{m(\tau)}\left(|\mu_{\star}(\tau)-\mu|>\epsilon|\mu|\right)
 &=\mathbb{P}_{m(\tau)}\left(\sqrt{|M(\tau)|}
 \frac{|\mu_{\star}(\tau)-\mu|}{v(\tau)}>\sqrt{|M(\tau)|}\frac{\epsilon|\mu|}{v(\tau)}\right)\\
 &\sim 2\left(1-\Phi(\epsilon \sqrt{|M(\tau)|}|\mu|/v(\tau))\right),
\end{align*}
where we have employed the key fact that the regenerated batch $\{X_k^{\star}\}_{k\in M(\tau)}$ is conditionally independent of the stopping batch $\tau$ on the stopping-time $\sigma$-field $\mathcal{F}_{m(\tau)}$ for 
the martingale central limit theorem (Theorem \ref{theorem very base MCLT}).
The crucial issue here lies in how to treat the unknown mean $\mu$ in the rightmost term, which is exactly what one is looking for.
On the one hand, it cannot be replaced with the original empirical mean $\mu(\tau)$ because it is not clear whether $\mu(\tau)$ approximates $\mu$.
We stress, on the other hand, that $\mu$ there cannot be replaced with the regenerated empirical mean $\mu_{\star}(\tau)$ either despite it approximates $\mu$ on the stopping-time $\sigma$-field $\mathcal{F}_{m(\tau)}$, since then the stopping batch $\tau$ would be determined after generating the regenerated one $\{X_k^{\star}\}_{k\in M(\tau)}$ in reverse order.

Before closing this section, let us note, as already outlined along Assumption \ref{standing assumption 1}, that a relative-precision stopping rule was developed in \cite{LWP2013} upon a similar assumption to \eqref{base assumption}, but along with a boundedness condition on the underlying randomness (specifically, $X_n\in [0,1]$, $a.s.$), which plays an essential role for deriving bounds for the error probability upon which the stopping rule is built.

\end{document}